\newcommand{\R}{\mathbb{R}}
\newcommand{\T}{\mathbb{T}}
\newcommand{\Tr}{\mathrm{Tr}}
\theoremstyle{plain}
\newtheorem{defi}{Definition}[section]
\newtheorem{prop}[defi]{Proposition}
\newtheorem{teo}[defi]{Theorem}
\newtheorem{cor}[defi]{Corollary}
\newtheorem{remark}[defi]{Remark}
\theoremstyle{definition}
\theoremstyle{remark}
\numberwithin{equation}{section}
\begin{document}

\title[]{Periodic homogenization of the principal eigenvalue of second-order elliptic operators}

\author[]{Gonzalo D\'avila}
\address{Gonzalo D\'avila: Universidad T\'ecnica Federico Santa Mar\'ia, Departamento de Matem\'atica, Avenida Espa\~na 1680, Valpara\'iso, Chile}
\email{gonzalo.davila@usm.cl}

\author[]{Andrei Rodr\'iguez-Paredes}
\address{Andrei Rodr\'iguez-Paredes: Departamento de Matem\'atica, Universidad de Concepci\'on, Avenida Esteban Iturra s/n - Barrio Universitario - Casilla 160 C - Concepci\'on, Chile}
\email{andreirodriguez@udec.cl}

\author[]{Erwin Topp}
\address{
Erwin Topp:
Departamento de Matem\'atica y C.C., Universidad de Santiago de Chile,
Casilla 307, Santiago, Chile.}
\email{erwin.topp@usach.cl}

\date{\today}

\begin{abstract}
In this paper we investigate homogenization results for the principal eigenvalue problem associated to $1$-homogeneous, uniformly elliptic, second-order operators. Under rather general assumptions, we prove that the principal eigenpair associated to an oscillatory operator converges to the eigenpair associated to the effective one. This includes the case of fully nonlinear operators. Rates of convergence for the eigenvalues are provided for linear and nonlinear problems, under extra regularity/convexity assumptions. Finally, a linear rate of convergence (in terms of the oscillation parameter) of suitably normalized eigenfunctions is obtained for linear problems.
\end{abstract}

%\keywords{}
\maketitle

\section{Introduction.}\label{intro}

Let $\Omega \subset \R^N$ be a bounded domain with smooth boundary, and consider a non-divergence form, linear  elliptic operator with the form
\begin{equation}\label{linintro}
L = a^{ij}(x) \partial_{x_i} \partial_{x_j} + b^j(x) \partial_{x_j}  + c(x), \quad x \in \Omega,
\end{equation}
such that $a^{ij} \in C(\Omega)$, $b^j, c \in L^\infty(\Omega)$, satisfying the uniform ellipticity condition
\begin{equation}\label{ellintro}
\lambda |\xi|^2 \leq a^{ij}(x) \xi_i \xi_j \leq \Lambda |\xi|^2, \quad x \in \Omega, \ \xi \in \R^N,
\end{equation}
for some given constants $0 < \lambda \leq \Lambda$. In the pioneering work of Berestycki, Nirenberg and Varadhan~\cite{BNV}, the authors provide a complete study of the principal eigenvalue problem associated to $L$ and $\Omega$, namely, the existence of a pair $(\phi_1, \lambda_1) \in C_+(\Omega) \times \R$ solving (in an adequate weak sense) the eigenvalue problem
\begin{equation*}
\left \{ \begin{array}{rll} L \phi_1 & = -\lambda_1 \phi_1 \quad & \mbox{in} \ \Omega \\ \phi_1 & = 0 \quad & \mbox{on} \ \partial \Omega. \end{array} \right .
\end{equation*} 

Here we have denoted by $C_+(\Omega)$ the space of continuous functions which are positive in $\Omega$. For equations in divergence form, the principal eigenvalue coincides with the first eigenvalue in the sense of the classical Rayleigh-Ritz formula. In particular, it is \textsl{unique} (in the sense that it is the unique eigenvalue associated to an eigenfunction that is positive in $\Omega$), and \emph{simple} (in the sense that the associated eigenspace is one-dimensional).

One of the remarkable features of the methods presented in~\cite{BNV} is the possibility to extend the study of the principal eigenvalue problem for fully nonlinear, uniformly elliptic operators with the form
\begin{equation}\label{Isaacs}
F = \inf_{\alpha \in \mathcal A} \sup_{\beta \in \mathcal B} L_{\alpha, \beta},
\end{equation}
where, given sets of indices $\mathcal A, \mathcal B$, $\{ L_{\alpha, \beta} \}_{\alpha \in \mathcal A, \beta \in \mathcal B}$ is a family of linear operators with the form~\eqref{linintro}. %, each of them satisfying the ellipticity condition~\eqref{ellintro} with uniform constants $\lambda, \Lambda$. 
This has been studied, for instance, by Armstrong~\cite{A} and Quaas-Sirakov~\cite{QS}: under suitable assumptions on $F$, %(see~\eqref{elliptic}-\eqref{poshomo} below), 
the authors prove the existence of a pair $(\phi_1, \lambda_1) \in C_+(\Omega) \times \R$, solving, in the viscosity sense, the Dirichlet problem
\begin{equation}\label{deflambdaFOmega}
\left \{ \begin{array}{rll} F(x, \phi_1, D\phi_1, D^2 \phi_1) & = -\lambda_1 \phi_1, \quad & \mbox{in} \ \Omega \\ \phi_1 & = 0 \quad & \mbox{on} \ \partial \Omega. \end{array} \right .
\end{equation} 

In particular, the principal eigenvalue $\lambda_1 = \lambda_1^+(F, \Omega)$ is characterized as
\begin{equation}\label{caract}
\lambda_1^+(F, \Omega) = \sup \{ \lambda :  \exists \ \phi > 0 \ \mbox{in} \ \Omega, \ F(x, \phi, D\phi, D^2\phi) \leq  -\lambda \phi \ \mbox{in} \ \Omega \},
\end{equation}
where the inequality is understood in the viscosity sense. This eigenvalue is unique and simple (the latter meaning that the eigenfunctions are unique up to a \textsl{positive} multiplicative constant). It is possible to consider the eigenvalue problem associated to \textit{negative} eigenfunctions, but its analysis is somewhat analogous to that of its positive counterpart (at least in what respects our interest here) and therefore we concentrate in the positive eigenpair.

\medskip

The purpose of this article is the study of stability results for the principal eigenvalue problem in the context of periodic homogenization. We introduce the main assumptions in order to present our results.
Denote by $\T^N$ the flat $N$-dimensional torus, and by $S^N$ the space of $N \times N$ symmetric matrices. We consider $F \in C(\Omega \times \T^N \times \R \times \R^N \times S^N)$, constants $0 < \lambda \leq \Lambda < +\infty$ and $C_1,C_2 > 0$ such that
    \begin{equation}\label{elliptic} 
    \begin{split}
        &M_{\lambda, \Lambda}^- (Y) - C_1(|q| + |s|)-C_2(|z| + |w|) \\
\leq & \ F(x+z, y+w, r + s, p + q, X + Y) - F(x,y,r,p,X) \\
 \leq & \ M_{\lambda, \Lambda}^+(Y) + C_1(|q| + |s|)+C_2(|z| + |w|),
        \end{split}
    \end{equation}
for all $x, z \in \Omega, y, w \in \T^N$, $r, s\in \R, p, q \in \R^N$ and $X, Y \in S^N$.    
Here, $M_{\lambda, \Lambda}^{\pm}$ denote the extremal Pucci operators (see, e.g., ~\cite{CC})
$$
M^+_{\lambda, \Lambda} (X) = \sup_{\lambda I \leq A \leq \Lambda I} \mathrm{Tr}(AX), \quad  M^-_{\lambda, \Lambda} (X) = \inf_{\lambda I \leq A \leq \Lambda I} \mathrm{Tr}(AX).
$$ 

We also assume $F$ is positively $1$-homogeneous, that is 
    \begin{equation}\label{poshomo}
        F(x,y,\alpha r, \alpha p, \alpha X) = \alpha F(x,y, r, p, X),
    \end{equation}
for all $\alpha \geq 0, x \in \Omega, y \in \T^N, p \in \R^N$ and $X \in S^N$.

Thus, for each $\epsilon \in (0,1)$, we have the existence of an eigenpair $(u^\epsilon, \lambda^\epsilon) \in C_+(\Omega) \times \R$ solving
    \begin{equation}\label{eq}
       \left \{ \begin{array}{rll} F(x, \nicefrac{x}{\epsilon}, u, Du, D^2 u) & = - \lambda^\epsilon u \quad & \mbox{in} \ \Omega, \\ u & = 0 \quad& \mbox{on} \ \partial \Omega. \end{array} \right .
    \end{equation}

% More specifficaly, we have the existence of a pair $(u^\epsilon, \lambda^\epsilon) \in C^{1, \alpha}(\Omega) \cap C(\bar \Omega) \times \R$ such that $u^\epsilon > 0$ in $\Omega$, solving the problem in the viscosity sense. 

Our first goal is to understand the homogenization behavior of problem~\eqref{eq} as $\epsilon \to 0$, and explore the possibility of having rates of convergence whenever homogenization holds. %We mainly follow the viscosity solution's approach of the problem presented by Evans in~\cite{E}.
In fact, it is well-known (see Evans~\cite{E}) that for each $x, r, p, X$, there exists a unique $c \in \R$ such that
   	\begin{equation}\label{cell}
        F(x, y, r, p, D^2 v(y) + X) = c \quad \mbox{in} \ \T^N,
   	\end{equation}
has a viscosity solution. We can thus define $\bar F: \Omega \times \R \times \R^N \times S^N \to \R$ as
    $
    \bar F(x, r,p,X) = c,
    $
    where $c$ is the unique constant for which~\eqref{cell} has a solution. We call $\bar F$ the \textsl{effective Hamiltonian}. As it can be seen in~\cite{E}, $\bar F$ is continuous in all its arguments, uniformly elliptic and positively $1$-homogeneous, and therefore, the effective eigenvalue problem
        \begin{equation}\label{eqeff}
        \left \{ \begin{array}{rll} \bar F(x, u, Du, D^2 u) & = - \bar \lambda u \ & \mbox{in} \ \Omega, \\ u & = 0 \ & \mbox{on} \ \partial \Omega, \end{array} \right .
    \end{equation}
    is solvable by the (positive) eigenpair $(u, \bar \lambda) \in C_+(\Omega) \times \R$. As before, $\bar \lambda$ is unique and simple.

\medskip

The first main result of this paper is the following
\begin{teo}\label{teo1}
    Assuming $F$ satisfies~\eqref{elliptic},~\eqref{poshomo}. Let $(u^\epsilon, \lambda^\epsilon)$ be the solution of~\eqref{eq}. Then, we have $\lambda^\epsilon \to \bar \lambda$ as $\epsilon \to 0$. 
    
    Let  $u^\epsilon$ with $\| u^\epsilon\|_\infty = 1$ and $u$ solving~\eqref{eqeff} with $\| u \|_\infty = 1$, then $u^\epsilon \to u$ as $\epsilon \to 0$, uniformly on $\bar \Omega$.
\end{teo}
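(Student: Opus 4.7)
My plan is to combine compactness for the family $(u^\epsilon,\lambda^\epsilon)$ with the nonlinear periodic homogenization method of Evans~\cite{E}, and to identify the limit via the uniqueness and simplicity of the principal eigenpair of the effective problem~\eqref{eqeff}.

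First I would establish a uniform bound $|\lambda^\epsilon|\le C$: from the characterization~\eqref{caract} and the Pucci bounds~\eqref{elliptic}, which are independent of the fast variable, $\lambda^\epsilon$ is trapped between the principal eigenvalues on $\Omega$ of the two ``universal'' operators $M^\pm_{\lambda,\Lambda}\pm C_1(|Du|+|u|)$. Normalizing $\|u^\epsilon\|_\infty=1$, the right-hand side $-\lambda^\epsilon u^\epsilon$ becomes uniformly bounded, so interior Krylov--Safonov/ABP estimates and boundary barriers on the smooth $\partial\Omega$ (using again only the Pucci bounds) give an $\epsilon$-uniform modulus of continuity for $u^\epsilon$ on $\bar\Omega$. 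Along a subsequence, $\lambda^{\epsilon_k}\to\lambda^*$ and $u^{\epsilon_k}\to w$ uniformly on $\bar\Omega$, with $w=0$ on $\partial\Omega$.

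Next I would pass to the limit by writing the equation as $G^k(x,\nicefrac{x}{\epsilon_k},u^{\epsilon_k},Du^{\epsilon_k},D^2 u^{\epsilon_k})=0$ with $G^k(x,y,r,p,X):=F(x,y,r,p,X)+\lambda^{\epsilon_k} r$. Since the added term is $y$-independent, the cell problem~\eqref{cell} is unchanged and the effective Hamiltonian of $G^k$ equals $\bar F(x,r,p,X)+\lambda^{\epsilon_k} r$, which converges locally uniformly to $\bar F+\lambda^* r$. The perturbed test function method then yields that $w$ is a viscosity solution of $\bar F(x,w,Dw,D^2w)=-\lambda^* w$ in $\Omega$. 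The remaining key point is to show $w\not\equiv 0$: the maximum $\|u^{\epsilon_k}\|_\infty=1$ is attained at points $x_{\epsilon_k}$ which, by the uniform boundary modulus and $u^{\epsilon_k}=0$ on $\partial\Omega$, stay at distance at least $\delta_0>0$ from $\partial\Omega$; hence $w$ attains the value $1$ at an interior point. Since $w\ge 0$, the strong maximum principle for the uniformly elliptic $1$-homogeneous $\bar F$ forces $w>0$ in $\Omega$, so $(w,\lambda^*)$ is a positive eigenpair of~\eqref{eqeff}; by uniqueness of the principal eigenvalue and simplicity of the eigenspace, $\lambda^*=\bar\lambda$ and $w=u$. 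As the limit is independent of the subsequence, the whole family converges.

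I expect the decisive step to be the homogenization itself: the operator carries an $\epsilon$-dependent eigenvalue, so the perturbed test function argument must cover a continuously varying family of Hamiltonians, and one has to verify that the corrector construction and the effective operator behave stably under the converging zeroth-order term $\lambda^{\epsilon_k} r$. A secondary but nontrivial technical point is producing the $\epsilon$-uniform boundary modulus for $u^\epsilon$ from Pucci-type barriers that do not depend on the fast variable.
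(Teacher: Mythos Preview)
Your proposal is correct and follows essentially the same route as the paper: uniform bounds on $\lambda^\epsilon$, $C^\alpha$ compactness of the normalized $u^\epsilon$ up to the boundary (via Krylov--Safonov inside and Pucci-type barriers/Lipschitz boundary estimates), the perturbed test function method to identify the limiting equation, nontriviality of the limit from the fact that the maxima of $u^\epsilon$ stay away from $\partial\Omega$, and then the strong maximum principle plus uniqueness/simplicity of the principal eigenpair. The only cosmetic difference is that the paper splits the argument into a $\limsup$ part (producing a subsolution of~\eqref{eqeff} and invoking~\eqref{caract}) and a $\liminf$ part (producing a supersolution and invoking the maximum principle below $\bar\lambda$), whereas you pass to the full equation along one subsequence and appeal directly to uniqueness of the positive eigenpair; both packagings use the same ingredients.
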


For linear operators, results of this type dates back to Osborn~\cite{O}, using the Spectral Theorem for compact operators in Banach spaces. In Kesavan~\cite{Kesavan}, the author obtains the result for self-adjoint linear problems in divergence form. From here, an ample class of problems have been addressed for operators having an appropriate weak formulation ($p$-Laplace operator), and the analysis is extended to the entire spectrum of the operator. Here we follow the viscosity approach initiated by Lions, Papanicoloau and Varadhan~\cite{LPV}, in junction with the tools of~\cite{BNV}.
The proof of this theorem is an adaptation of the perturbed test function method introduced in~\cite{E}. To some extent, the problem can be regarded as a multi-scale homogenization problem as in~\cite{ABM}, but the resonant nature of the problem prevents the use of comparison principles, which are replaced by the simplicity of the eigenvalue. We remark that operators like~\eqref{Isaacs} are commonly referred as \textsl{Bellman-Isaacs} operators, and naturally arise in some applications like the study of zero-sum stochastic differential games, see~\cite{FS}. In the case of a convex operator (namely, when $\mathcal A$ is a singleton), $F$ in~\eqref{Isaacs} is known as a \textsl{Bellman} operator, and appears in the study of stochastic optimal control problems.

\medskip

In the second part of the paper, we deal with rates of convergence for the principal eigenvalue problem.  In the case of \textsl{proper problems}, namely, for equations with the form
$$
F(x, \frac{x}{\epsilon}, u, Du, D^2 u) = 0,
$$
with $F$ such that $r \mapsto F(x, y, r, p, X)$ is nonincreasing, rates of convergence are at disposal in various contexts. In~\cite{CM}, Camilli and Marchi obtain a polynomial rate for Hamilton-Jacobi-Bellmann problems posed in $\R^N$. The convexity of the nonlinearity allows the use of $C^{2, \alpha}$ estimates to construct approximated correctors for the problem, which in turn lead to the rate $|u^\epsilon - u| = O(\epsilon^{\alpha'})$ for some $0 < \alpha' < \alpha$. Also based on regularity, higher order expansions of the solution $u^\epsilon$ are obtained by Kim and Lee~\cite{KL} for the Dirichlet problem on a bounded smooth domain. For the problem treated therein, the nonlinearity $F$ is convex, smooth and contains no lower-order terms. %Furthermore, a high degree of regularity is assumed for $F$. 
In~\cite{CS}, Caffarelli and Souganidis introduce the concept of $\delta$-viscosity solutions in order to tackle the non-convex case, and where comparison principle for this type of generalized solutions plays a key role. % {\red From here we can place our problem at the borderline of the proper case, each time we have control of the convergence rate of the eigenvalue. siento que es un oscuro el comentario.} {\blue [Coincido, --A.]}

%{\blue This type of result is already addressed in ~\cite{Kesavan} (where they are referred to as ``error estimates''). However, all estimates therein are expressed in terms of an auxiliary problem and, strictly speaking, no \textit{modulus of continuity} in $\epsilon$ for neither the eigenvalue nor eigenfunction is given.}

\medskip

Concerning the rate of convergence for the eigenvalues, we start focusing on the nonlinear eigenvalue problem
\begin{equation}\label{fully}
       \left \{ \begin{array}{rll} F \Big{(} \frac{x}{\epsilon}, D^2 u^\epsilon \Big{)} & = -\lambda^\epsilon u^\epsilon \quad & \mbox{in} \ \Omega, \\ u^\epsilon & = 0 \quad & \mbox{on} \ \partial \Omega, \end{array} \right .
    \end{equation}
and its associated effective problem
    \begin{equation}\label{efffully}
     \left \{ \begin{array}{rll}  \bar F (D^2 u) & = -\bar \lambda u \quad & \mbox{in} \ \Omega, \\ \bar u & = 0 \quad & \mbox{on} \ \partial \Omega. \end{array} \right .
    \end{equation}

Under convexity/regularity assumptions on $F$, we can prove the following rate of convergence for $\{ \lambda^\epsilon \}_\epsilon$.
    \begin{teo}\label{teo_rate_vp}
    Assume $F \in C^{4,1}(\T^N \times S^N)$ is convex in its second variable, and satisfies~\eqref{elliptic},~\eqref{poshomo}. Let $\lambda^\epsilon, \bar \lambda$ be, respectively, the principal eigenvalues associated to~\eqref{fully} and \eqref{efffully}. Then, there exists a constant $C$ just depending on $F$ and $\Omega$ such that 
        \[
        |\lambda^\epsilon - \bar \lambda| \leq C \epsilon.
        \]
    \end{teo}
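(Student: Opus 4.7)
The plan is to prove the two one-sided bounds $\lambda^\epsilon \leq \bar\lambda + C\epsilon$ and $\lambda^\epsilon \geq \bar\lambda - C\epsilon$ separately, by constructing suitable test functions and inserting them into the sup-characterization~\eqref{caract} of the principal eigenvalue. The core technical tool is the perturbed test function method of~\cite{E}, now quantified using the regularity afforded by the convexity and $C^{4,1}$ smoothness of $F$.

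\textbf{Regularity setup.} Since $F$ is convex in $X$ and $F\in C^{4,1}$, Evans--Krylov applied to the cell problem $F(y, M + D^2_y w) = \bar F(M)$ on $\T^N$ yields a corrector $w(\cdot; M)\in C^{3,\alpha}(\T^N)$; differentiating the cell problem in $M$ (a linearization argument exploiting the smoothness of $F$) further produces a Lipschitz dependence $M \mapsto w(\cdot; M)$ in the uniform norm, on compact sets of $M$. The effective operator $\bar F$ inherits convexity, $1$-homogeneity, and uniform ellipticity, and it is sufficiently smooth so that, by Evans--Krylov interior regularity combined with boundary Schauder theory on the smooth domain $\Omega$, the effective eigenfunction $\bar u$ lies in $C^{3,\alpha}(\bar\Omega)$, with the Hopf bound $\bar u(x) \geq c_0 \, \mathrm{dist}(x, \partial\Omega)$.

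\textbf{Perturbed test function (lower bound).} I define
\[
\phi^\epsilon(x) = \bar u(x) + \epsilon^2 w\bigl(x/\epsilon;\, D^2 \bar u(x)\bigr).
\]
The chain rule, together with the regularity of $w$ in $y$ and $M$ and of $\bar u$, produces
\[
D^2 \phi^\epsilon(x) = D^2 \bar u(x) + D^2_y w\bigl(x/\epsilon;\, D^2 \bar u(x)\bigr) + \epsilon \, O(1),
\]
with an $O(1)$ term controlled by $\|\bar u\|_{C^{3,\alpha}}$ and the Lipschitz constants of $w$ in $M$. Invoking the cell problem identity at $M = D^2 \bar u(x)$ and the Lipschitz continuity of $F(x/\epsilon,\cdot)$ in the matrix variable (from uniform ellipticity),
\[
F\bigl(x/\epsilon,\, D^2 \phi^\epsilon\bigr) = \bar F(D^2 \bar u) + O(\epsilon) = -\bar\lambda \, \bar u + O(\epsilon).
\]
Since $\phi^\epsilon - \bar u = O(\epsilon^2)$, the function $\phi^\epsilon$ is positive in $\Omega$ for $\epsilon$ small, hence admissible in~\eqref{caract}. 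Rearranging gives $F(x/\epsilon, D^2\phi^\epsilon) + (\bar\lambda - K\epsilon)\phi^\epsilon \leq 0$ in $\Omega$, provided a boundary adjustment is made as explained below, and~\eqref{caract} yields $\lambda^\epsilon \geq \bar\lambda - C\epsilon$. The opposite bound follows from a symmetric construction: one builds a test function admissible in the characterization of $\bar\lambda$ either by perturbing a smoothed version of $u^\epsilon$ (relying on the Evans--Krylov interior regularity of $u^\epsilon$, available thanks to convexity of $F$) or by a duality argument through the adjoint eigenfunction of the linearization of $\bar F$ at $\bar u$.

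\textbf{Main obstacle.} The principal technical difficulty is the boundary layer: the uniform $L^\infty$ error $O(\epsilon)$ in the PDE does not immediately translate into an $O(\epsilon)$ eigenvalue shift, since $\phi^\epsilon$ vanishes on $\partial\Omega$ and the rearrangement $F + (\bar\lambda - K\epsilon)\phi^\epsilon \leq O(\epsilon) - K\epsilon\, \phi^\epsilon$ cannot be made non-positive uniformly by choosing $K$ alone. The expected resolution is to either add a boundary corrector that absorbs the PDE error in a neighborhood of $\partial\Omega$, or to use a second-order corrector (enabled by the $C^{4,1}$ regularity of $F$) to improve the interior error to $O(\epsilon^2)$ and then to split the analysis into the interior $\Omega_\epsilon = \{d(\cdot, \partial\Omega) > \epsilon\}$, where Hopf gives $\bar u \geq c_0\epsilon$ and the argument closes, and the boundary strip of width $\epsilon$, where a barrier argument using Hopf and the simplicity of $\bar\lambda$ takes over.
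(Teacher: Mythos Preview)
Your overall strategy---construct a perturbed test function from the effective eigenfunction $\bar u$ and the cell corrector, then feed it into a variational characterization of $\lambda^\epsilon$---is the same as the paper's. But the two places you leave open are exactly where the argument has to be made precise, and as written they are genuine gaps.

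\textbf{Boundary layer.} You correctly identify that $F(x/\epsilon, D^2\phi^\epsilon) + (\bar\lambda - K\epsilon)\phi^\epsilon \leq 0$ cannot be forced near $\partial\Omega$ by enlarging $K$: the PDE error stays of size $\epsilon$ while $\phi^\epsilon\to 0$. The paper's concrete device, which you do not supply, is to replace $\phi^\epsilon$ by
\[
\phi = w^\epsilon(x) + \epsilon + C\epsilon\, d(x)^{\gamma}, \qquad 0<\gamma<1,
\]
where $d=\mathrm{dist}(\cdot,\partial\Omega)$. The point is that $\mathcal M^{+}(D^2 d^\gamma)\leq -c\, d^{\gamma-2}$ near $\partial\Omega$, so the added term contributes $-Cc\,\epsilon\, d^{\gamma-2}$ to the equation, which dominates the $O(\epsilon)$ error in a fixed ($\epsilon$-independent) collar of the boundary; in the interior one simply uses $\phi\geq c_\delta>0$. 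Your proposed alternative---push the interior error to $O(\epsilon^2)$ and then argue separately in a strip of width $\epsilon$---does not close: even an $O(\epsilon^2)$ error cannot be absorbed by $K\epsilon\,\phi^\epsilon$ in the region $\{d\lesssim \epsilon/K\}$, and the ``barrier argument using Hopf and simplicity'' you allude to is not specified. For the lower bound the paper actually plugs $\phi$ into the Donsker--Varadhan minmax formula $-\lambda^\epsilon=\inf_{\phi>0}\int (F(x/\epsilon,D^2\phi)/\phi)\,d\mu^\epsilon$, which is a convenient packaging of the same idea and avoids the need to make the inequality pointwise everywhere.

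\textbf{Upper bound.} Your ``symmetric construction'' is not a proof. Perturbing $u^\epsilon$ would require uniform-in-$\epsilon$ control of $D^3 u^\epsilon$ (to write $D^2$ of the perturbed function), and since the coefficients oscillate at scale $\epsilon$ such bounds are not available; the adjoint/duality route is not explained. The paper avoids this entirely: it reuses the \emph{same} corrector $w^\epsilon$ built from $\bar u$, now setting
\[
\phi = w^\epsilon(x) - \epsilon - C\epsilon\, d(x)^{\gamma},
\]
so that $\phi<0$ near $\partial\Omega$, and shows (again via the $d^{\gamma-2}$ blow-up, with the opposite sign) that $F(x/\epsilon,D^2\phi)\geq -(\bar\lambda+\ell\epsilon)\phi$ in $\Omega$ for suitable $\ell$. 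One then multiplies $\phi$ by a constant so it touches $u^\epsilon$ from below at an interior point where both are positive, and the viscosity inequality at the contact point gives $\lambda^\epsilon\leq \bar\lambda+\ell\epsilon$ directly. No regularity of $u^\epsilon$ beyond continuity is needed.

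In short: the missing ingredient in both directions is the $\epsilon\, d^{\gamma}$ boundary corrector, and for the upper bound the correct move is to touch $u^\epsilon$ from below with a test function built from $\bar u$, not the other way around.
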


The proof of Theorem~\ref{teo_rate_vp} is based on the Donsker-Varadhan variational characterization of the principal eigenvalue (see~\cite{A}), together with the correctors found in \cite{KL}, for which we too will need to impose some additional hypotheses on the operator $F$.  The same arguments allows us to prove the convergence of the principal eigenvalue for linear operators with lower order terms with the form
        \begin{align}\label{Leps}
        \begin{split}
            L^\epsilon u(x) %& = a(x/\epsilon) D_{xx}^2 u(x) + b(x/\epsilon) \cdot D_x u(x) + c(x/\epsilon) u(x)\\
            & = a^{ij}(x/\epsilon) \partial_{x_i x_j}^2 u(x) + b^j(x/\epsilon) \partial_{x_j} u(x) + c(x/\epsilon) u(x),
            \end{split}
        \end{align}
in the case the coefficients are smooth. For this, we need to adapt the method presented in~\cite{KL} in order to deal with the lower order terms in the construction of the correctors.

In fact, considering $L^\epsilon$ as above and the corresponding principal eigenvalue problem
%\begin{equation}\label{Ly}
%Lu(x) = L_y u(x) = a^{ij}(y) \partial_{x_i x_j}^2 u(x) + b^j(y) \partial_{x_j} u(x) + c(y) u(x),
%\end{equation}
%and the associated oscillatory operator $L^\epsilon$ as
        \begin{equation}\label{eqlin}
           \left \{ \begin{array}{rll} L^\epsilon u^\epsilon & = -\lambda^\epsilon u^\epsilon \quad & \mbox{in} \  \Omega, \\ u^\epsilon & = 0 \quad & \mbox{on} \  \partial \Omega, \end{array} \right .
        \end{equation}
we are able to provide a rate of convergence for the principal eigenfunctions associated to this problem. It is a well-kown fact that the effective problem inherits the main structure of the original one, see~\cite{E}. In particular, the effective operator related to $L^\epsilon$ is linear and takes the form
$$
\bar L u(x) = \bar a^{ij} \partial_{x_i x_j}^2 u(x) + \bar b^j \partial_{x_j} u(x) + \bar c u(x),
$$
for some constants entries $\bar a^{ij}, \bar b^j, \bar c$. Moreover, the effective matrix $\bar a$ is uniformly elliptic, with the same ellipticity constants of $(a^{ij})_{ij}$. Thus, the effective problem in this case takes the form
        \begin{equation}\label{eqefflin}
           \left \{ \begin{array}{rll} \bar L u & = - \bar \lambda u \quad & \mbox{in} \  \Omega, \\ u & = 0 \quad & \mbox{on} \  \partial \Omega. \end{array} \right .
        \end{equation}

Of course, if we ask $u > 0$ in $\Omega$ in~\eqref{eqefflin}, then $\bar \lambda$ is the principal eigenvalue associated to $\bar L$ and $\Omega$. Analogous comment for the perturbed problem~\eqref{eqlin}.

Concerning the convergence of the eigenfunctions (and eigenvalues), our result is the following
  \begin{teo}\label{teo_rate_function}
Let $L^\epsilon$ in~\eqref{Leps} with $a \in C^6(\T^N; S^N), b \in C^6(\T^N; \R^N)$, and $c \in C^6(\T^N)$. Let $(u, \bar \lambda)$ be a solution to~\eqref{eqefflin} with $u > 0$ in $\Omega$. Then, there exists $C > 0$ depending on the coefficients $a,b,c$ and $\Omega$, such that, for all $\epsilon \in (0,1)$, there exists $u^\epsilon$ solving~\eqref{eqlin} with $u^\epsilon > 0$ in $\Omega$, satisfying
		\begin{equation}
    		| \lambda^\epsilon - \bar \lambda| + \|u^\epsilon - u\|_{L^\infty(\Omega)} \leq C \epsilon.
		\end{equation}
\end{teo}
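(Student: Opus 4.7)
The plan combines a two-scale corrector expansion with a compactness argument that turns an almost-eigenpair into a genuine eigenpair estimate. First, adapting the construction of~\cite{KL} to the linear setting with drift and potential, I solve a sequence of second-order cell problems on $\T^N$ for the auxiliary operator $\mathcal M := a^{ij}(y)\partial_{y_iy_j}$,
$$\mathcal M \chi = g(\,\cdot\,;u,Du,D^2u),$$
in which the compatibility condition against the invariant measure of $\mathcal M$ at each order is satisfied precisely when the effective coefficients appearing in $g$ coincide with $\bar a,\bar b,\bar c$; the hypothesis $a,b,c \in C^6(\T^N)$ combined with Schauder theory on the torus delivers the regularity of correctors $\chi_1,\chi_2$ needed below. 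Setting
$$v^\epsilon(x) := u(x) + \epsilon^2 \chi_1(x,x/\epsilon) + \epsilon^3 \chi_2(x,x/\epsilon),$$
order-by-order cancellation in the expansion of $L^\epsilon v^\epsilon$ yields
$$L^\epsilon v^\epsilon + \bar\lambda v^\epsilon = R^\epsilon \text{ in }\Omega, \qquad \|R^\epsilon\|_{L^\infty(\Omega)} \leq C\epsilon, \qquad \|v^\epsilon - u\|_{L^\infty(\bar\Omega)} \leq C\epsilon^2,$$
and in particular $\|v^\epsilon\|_{L^\infty(\partial\Omega)} \leq C\epsilon^2$ since $u=0$ on $\partial\Omega$.

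Next, the Donsker--Varadhan variational argument leading to Theorem~\ref{teo_rate_vp}, transcribed to the linear operator $L^\epsilon$ as alluded to in the paragraph following~\eqref{Leps} and used in conjunction with $v^\epsilon$ as test function, yields $|\lambda^\epsilon - \bar\lambda|\leq C\epsilon$. Rewriting the above identity in terms of $\lambda^\epsilon$ and setting $\tilde R^\epsilon := R^\epsilon + (\lambda^\epsilon - \bar\lambda)v^\epsilon$ gives
$$(L^\epsilon + \lambda^\epsilon)v^\epsilon = \tilde R^\epsilon, \qquad \|\tilde R^\epsilon\|_{L^\infty(\Omega)} \leq C\epsilon,$$
so that $v^\epsilon$ is an almost-eigenfunction of $L^\epsilon$ for the eigenvalue $\lambda^\epsilon$, with $O(\epsilon^2)$ boundary trace.

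The main obstacle is the final step: converting almost-eigenfunction closeness into $L^\infty$ closeness of the genuine eigenfunctions in the presence of a kernel. Fix an interior point $x_0 \in \Omega$ and normalize the positive eigenfunction $u^\epsilon$ so that $u^\epsilon(x_0)=u(x_0)$. Let $w^\epsilon := u^\epsilon - v^\epsilon$, so that
$$(L^\epsilon+\lambda^\epsilon)w^\epsilon = -\tilde R^\epsilon \text{ in }\Omega, \qquad w^\epsilon = -v^\epsilon = O(\epsilon^2) \text{ on }\partial\Omega, \qquad w^\epsilon(x_0) = O(\epsilon^2).$$
Because $L^\epsilon + \lambda^\epsilon$ has a one-dimensional kernel spanned by $u^\epsilon$, no direct maximum-principle estimate of $w^\epsilon$ in terms of the data is available. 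I resolve this by blow-up: if $\|w^\epsilon\|_\infty/\epsilon \to \infty$ along a subsequence, the rescaling $\tilde w^\epsilon := w^\epsilon/\|w^\epsilon\|_\infty$ satisfies $\|\tilde w^\epsilon\|_\infty=1$, $(L^\epsilon+\lambda^\epsilon)\tilde w^\epsilon \to 0$ uniformly in $\Omega$, $\tilde w^\epsilon \to 0$ on $\partial\Omega$, and $\tilde w^\epsilon(x_0) \to 0$. Uniform interior H\"older estimates from Krylov--Safonov combined with the qualitative homogenization of Theorem~\ref{teo1} let me extract a uniform limit $\tilde w$ solving $(\bar L + \bar\lambda)\tilde w = 0$ in $\Omega$ with zero Dirichlet data. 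By simplicity of $\bar\lambda$, $\tilde w = \beta u$ for some $\beta\in\R$; the condition $\tilde w(x_0)=0$ with $u(x_0)>0$ forces $\beta=0$, contradicting $\|\tilde w\|_\infty=1$. Hence $\|w^\epsilon\|_{L^\infty(\Omega)}\leq C\epsilon$, and the triangle inequality with $\|v^\epsilon - u\|_\infty \leq C\epsilon^2$ gives $\|u^\epsilon - u\|_{L^\infty(\Omega)}\leq C\epsilon$, which together with the eigenvalue rate completes the proof.
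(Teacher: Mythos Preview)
Your proof is correct and follows the same overall architecture as the paper's: correctors plus the Donsker--Varadhan minmax formula give the eigenvalue rate (this is the paper's Proposition~\ref{lemalambdas}), and a blow-up argument exploiting the simplicity of $\bar\lambda$ gives the eigenfunction rate. The two proofs differ in implementation in two places. First, instead of your explicit corrector expansion $v^\epsilon$, the paper introduces the auxiliary Dirichlet problem $L^\epsilon w^\epsilon=-\bar\lambda u$ in $\Omega$, $w^\epsilon=0$ on $\partial\Omega$; this is a \emph{proper} (non-resonant) homogenization problem, so the rate $\|w^\epsilon-u\|_\infty\le C\epsilon$ is read off directly from~\cite{KL}, and $w^\epsilon$ has exact zero boundary trace, avoiding the $O(\epsilon^2)$ boundary values you must carry. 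Second, rather than normalizing $u^\epsilon$ by point-evaluation at $x_0$, the paper sets $z^\epsilon=(1+t_\epsilon)u^\epsilon-w^\epsilon$ with $t_\epsilon$ chosen so that $(z^\epsilon,u^\epsilon)_{L^2}=0$, and in the blow-up limit obtains $(z,u)_{L^2}=0$ rather than $z(x_0)=0$; either condition forces $z=0$ by simplicity. Both routes are equally valid; the paper's is slightly more modular (it black-boxes~\cite{KL}), while yours is more self-contained. One small caveat: in your compactness step you should invoke global (up-to-the-boundary) H\"older estimates rather than merely interior Krylov--Safonov, so that the point where $|\tilde w^\epsilon|$ attains its maximum does not escape to $\partial\Omega$ and $\|\tilde w\|_\infty=1$ survives in the limit; the paper handles this through Corollary~\ref{cor_resonant}.
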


%We are able to provide a rate for the eigenvalues by using the Donsker-Varadhan characterization of the principal eigenvalue, together with higher order correctors constructed as in~\cite{KL}. We mention here that such a procedure can be readily generalized for non-autonomous equations, but we do not pursue it in this work. It is also important to remark the proof is completely decoupled with the rates of convergence of the eigenfunction, which allows us to consider nonlinear operators. {\blue [Creo que es un poco repetitivo, sacarÃ­a este pÃ¡rrafo y agregarÃ­a la primer lÃ­nea del siguiente--A.]}

The rate of convergence for the principal eigenvalue is obtained in the same way as Theorem~\ref{teo_rate_vp}. For the rate of the eigenfunction, the result follows basically by Theorem 1 in~\cite{O}, but we provide a pure PDE proof here, by introducing an auxiliary homogenization problem whose solution serves as a pivot between $u^\epsilon$ and $u$. This strategy was previously used in~\cite{Kesavan}, and we combine it with the rates of convergence of~\cite{KL} for the auxiliary problem. Again, the simplicity of the principal eigenvalue  $\bar \lambda$ in~\eqref{eqefflin} plays a crucial role.
                                    
The paper is organized as follows: in Section~\ref{convergence} we prove the general convergence result, Theorem~\ref{teo1}. In Section~\ref{rate_value}, we analyze the convergence of the principal eigenvalue in some particular linear and nonlinear cases. Finally, in Section~\ref{rate_function} we prove rate of the convergence of the eigenfunction in the linear case. 
                                    
\section{General convergence result.}\label{convergence}

We start with the following result, see Lemma 3.1 in~\cite{E}.
    \begin{prop}\label{propbarH}
    	Under the assumptions of Theorem \ref{teo1}, for each $x, p \in \R^N, r \in \R, X \in \mathbb S^N$, there exists a unique constant $c = c(x, r, p, X)$ such that the problem~\eqref{cell} has a viscosity solution $v \in C^{1, \sigma}(\T^N)$. This solution is unique up to an additive constant.
    \end{prop}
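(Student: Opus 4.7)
The plan is to follow Evans's classical perturbed-corrector approach. For each $\delta > 0$, I would first study the penalized equation
\begin{equation*}
\delta v^\delta(y) + F(x, y, r, p, D^2 v^\delta(y) + X) = 0 \quad \text{in } \T^N,
\end{equation*}
where $x, r, p, X$ are frozen parameters. Because $\delta > 0$ renders the equation strictly proper while $F$ remains uniformly elliptic, Perron's method combined with the comparison principle for uniformly elliptic second order equations on the compact manifold $\T^N$ yields a unique viscosity solution $v^\delta \in C(\T^N)$. Comparing with the constant sub/supersolutions $\pm M/\delta$, where $M := \|F(x,\cdot,r,p,X)\|_{L^\infty(\T^N)}$, immediately gives $\|\delta v^\delta\|_\infty \leq M$.

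The next step, and the main technical obstacle, is to establish a uniform (in $\delta$) oscillation/regularity bound for $v^\delta$. Observe that the equation for $v^\delta$ involves only $D^2 v^\delta$ (the arguments $r, p$ are frozen), so the structure is that of a uniformly elliptic equation in non-divergence form with bounded right-hand side on the torus. By the Krylov--Safonov Harnack inequality applied to the nonnegative function $v^\delta - \min v^\delta$ together with Caffarelli's interior $C^{1,\sigma}$ estimates, I obtain
\begin{equation*}
\mathrm{osc}_{\T^N}(v^\delta) + [v^\delta]_{C^{1,\sigma}(\T^N)} \leq C,
\end{equation*}
with $C$ depending only on $\lambda, \Lambda, C_1, C_2$ and on the frozen data $(x,r,p,X)$, but not on $\delta$. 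Fixing some $\bar y \in \T^N$ and setting $\tilde v^\delta := v^\delta - v^\delta(\bar y)$, Arzelà--Ascoli yields, along a subsequence $\delta_k \to 0$, uniform convergence $\tilde v^{\delta_k} \to v$ in $C^1(\T^N)$ and $\delta_k v^{\delta_k}(\bar y) \to -c$ for some constant $c \in \R$. Stability of viscosity solutions then gives $F(x, y, r, p, D^2 v + X) = c$ in $\T^N$.

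To prove uniqueness of $c$, I would use the following key observation: if $w \in C(\T^N)$ is any solution of the cell problem with constant $c$, then $-c/\delta + w$ solves $\delta u + F(x,y,r,p,D^2 u + X) = \delta w$, so comparison with $v^\delta$ inside the $\delta$-problem yields
\begin{equation*}
\bigl\| v^\delta - \bigl(-c/\delta + w\bigr) \bigr\|_\infty \leq \|w\|_\infty,
\end{equation*}
whence $\delta v^\delta \to -c$ uniformly on $\T^N$. Since the limit of $\delta v^\delta$ is unique, the constant $c$ is unique, independent of the extracted subsequence, and therefore well defined as a function of $(x,r,p,X)$.

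Finally, for uniqueness of the corrector up to additive constants, suppose $v_1, v_2$ both solve the cell problem with the same $c$ and set $M := \max_{\T^N}(v_1 - v_2)$, which is attained at some $y_0$. Since the equation is invariant under additive constants in the unknown (the variable $r$ is frozen), $v_2 + M$ is again a solution and touches $v_1$ from above at $y_0$. The Caffarelli--Cabré strong maximum principle for uniformly elliptic fully nonlinear operators then forces $v_1 - v_2 \equiv M$ on $\T^N$, giving the claimed rigidity. I expect the oscillation/regularity estimate above to be the principal technical step, as it is what makes the compactness argument and the comparison-based uniqueness of $c$ possible.
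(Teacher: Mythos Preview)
Your outline is correct and is precisely the argument the paper has in mind: the paper does not give its own proof of this proposition but simply cites Lemma~3.1 in Evans~\cite{E}, whose proof is exactly the vanishing-discount scheme you describe (penalized problem, uniform oscillation/H\"older bounds, compactness, and comparison to identify the limit and the constant). The only point worth noting is that your $C^{1,\sigma}$ claim is justified in the paper (later, inside the proof of Theorem~\ref{teo1}) via Theorem~VII.2 in~\cite{IL} together with Theorem~2.1 in~\cite{T}, which is consistent with the ``Caffarelli-type'' interior estimates you invoke.
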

    
As we mentioned in the introduction, this result allows us to define the effective Hamiltonian $\bar F \in C(\Omega \times \R \times \R^N \times S^N)$, which satisfies the assumption in~\cite{QS} to define the principal eigenvalue $\bar \lambda = \lambda_1^+(\bar F, \Omega)$ in the sense described in~\eqref{deflambdaFOmega}.

By the ellipticity assumption~\eqref{elliptic}, a simple modification of the computations of Lemma 1.1 in~\cite{BNV} involving the extremal Pucci operators, we have an uniform upper bound for the family of eigenvalues $\{ \lambda^\epsilon \}_\epsilon$ with the form
    $
        \lambda^\epsilon \leq C,
    $
where $C = C(r,\lambda, \Lambda, N, C_1)$ is a constant depending on the operator, the dimension, and $r \in (0,1)$, the radius of a ball contained in $\Omega$. On the other hand, it is easy to see, using the characterization of the principal eigenvalue, that $\lambda^\epsilon \geq -C_1$. Therefore, the sequence $\{ \lambda^\epsilon \}_\epsilon$ is equibounded.

\begin{proof}[Proof of Theorem \ref{teo1}]
%
%
%{\color{red}We are going to concentrate in $\lambda_\epsilon := \lambda_1^+(H_\epsilon)$}. Using \ref{data} and the arguments of \cite{BNV}, it is possible to prove that $\{ \lambda_\epsilon \}_\epsilon$ is uniformly bounded in terms of $\epsilon$, therefore the following limits exist
We start by defining
    \begin{equation*}
 \lambda^* = \limsup_{\epsilon \to 0} \lambda_\epsilon, \quad \lambda_* = \liminf_{\epsilon \to 0} \lambda_\epsilon.
    \end{equation*}
 
 Replacing $F$ by $F - (C_1 + 1)u$, we can assume $F$ is proper and $\lambda_* > 0$.

For each $\epsilon\in (0,1)$, denote by $u^\epsilon$ the principal eigenfunction associated to $\lambda^\epsilon$ such that $\| u^\epsilon \|_\infty =1$. 

We consider $\epsilon_k \to 0$ such that $\lambda^k = \lambda^{\epsilon_k} \to \lambda^*$, and denote $u_k = u_{\epsilon_k}$. Given that the operator is elliptic we have by standard regularity theory that $\{ u_k \}_k$ is precompact in $C^\alpha(\bar \Omega)$ for some $\alpha > 0$. Then, up to subsequences, it converges uniformly to some $u \in C^{\alpha'}(\bar \Omega)$, $0\leq\alpha'<\alpha$, which is nonnegative in $\Omega$. Notice that there exists $\bar x \in \Omega$ where  $u(\bar x) = 1$ since the family $u_k$ is uniformly Lipschitz on the boundary of the domain, see Theorem 1.1 in~\cite{SS}. Thus, every sequence of points $x_k \in \Omega$ such that $u_k(x_k) = 1$ remains uniformly away the boundary, and we can take $\bar x$ as a limit point of the sequence. 

Now, we prove that the limit $u$ solves
\begin{equation}\label{nublado2}
\bar F(x, u, Du, D^2 u) \leq -\lambda^* u \quad \mbox{in} \ \Omega; \qquad u = 0 \quad \mbox{on} \ \partial \Omega,
\end{equation}
in the viscosity sense.

By contradiction, we assume this does not hold. Then, there exists $x_0 \in \Omega$ and a smooth function $\phi$ strictly touching $u$ from below at $x_0$ such that
\begin{align}\label{nublado}
\bar F(x_0, \phi(x_0), D\phi(x_0), D^2\phi(x_0)) \geq -\lambda^* \phi(x_0) + \theta.
\end{align}

Now, we consider the function $\phi_k(x) = \phi(x) + \epsilon_kv(\frac{x}{\epsilon_k}), \ x \in \Omega$, where $v$ is a solution to the cell problem~\eqref{cell} associated to $x = x_0$, $r = \phi(x_0)$, $p = D\phi(x_0)$ and $X = D^2 \phi(x_0)$. In view of~\eqref{nublado}, we claim that for all $r > 0$ small enough, $\phi^\epsilon$ satisfies
\begin{equation*}
F(x, \frac{x}{\epsilon_k}, \phi_k, D\phi_k, D^2\phi_k) \geq - \lambda^k \phi_k \quad \mbox{in} \ B_r(x_0),
\end{equation*}
in the viscosity sense.
This follows the usual pertubed test function method, but we provide the details for completeness. Let $x_1 \in B_r(x_0)$ and $\psi$ a smooth function touching $\phi_k$ from above. Then, we have $y_1 = x_1/\epsilon_k$ is a maximum point for the function
$$
y \mapsto v(y) - \frac{1}{\epsilon_k^2}(\psi(\epsilon_k y) - \phi(\epsilon_k y)).
$$

Notice that $v$ is $C^{1, \sigma}$ by classical results in elliptic theory (namely, Theorem VII.2 in~\cite{IL} together with Thereom 2.1 in~\cite{T}), while $\psi$ and $\phi$ are smooth. We thus compute
$$
\epsilon_k Dv(y_1) + D\phi(x_1) = D\psi(x_1).
$$

Using the equation solved by $v$, we have
\begin{equation*}
F(x_0, \frac{x_1}{\epsilon_k} , \phi(x_0), D\phi(x_0), D^2\psi(x_1) - D^2\phi(x_1) + D^2\phi(x_0)) \geq -\lambda^* \phi(x_0) + \theta.
\end{equation*}

Then, taking $k$ large enough and $r > 0$ small, but independent of $\psi$, by~\eqref{elliptic} we arrive at
\begin{equation*}
F(x_1, \frac{x_1}{\epsilon_k}, \phi_k(x_1), D\psi(x_1), D^2 \psi(x_1)) \geq -\lambda^* \phi(x_0) + \theta/2,
\end{equation*}
and using the definition of $\lambda^*$ and $\phi_k$, we conclude the desired viscosity inequality
\begin{equation*}
F(x_1, \frac{x_1}{\epsilon_k}, \phi_k(x_1), D\psi(x_1), D^2 \psi(x_1)) \geq -\lambda^k \phi_k(x_1).
\end{equation*}

Then, for all $r > 0$ small, we can use maximum principles in small domains (see Theorem 3.5 in~\cite{QS}) to conclude that
$$
\sup_{B_r(x_0)} \{ \phi_k - u_k \} \leq \sup_{\partial B_r(x_0)} \{ \phi_k - u_k \}.
$$
Passing to the limit as $k \to \infty$, and arranging terms, we conclude that
%$$
%\sup_{B_r(x_0)} \{ \phi - u \} \leq \sup_{\partial B_r(x_0)} \{ \phi - u \},
%$$
%that is 
$$
\inf_{B_r(x_0)} \{ u - \phi \} \geq \inf_{\partial B_r(x_0)} \{ u - \phi \},
$$
which is a contradiction with the fact that $x_0$ is a strict test point. This concludes~\eqref{nublado2}. Notice that in particular we have $\bar F(x, u, Du, D^2 u) \leq 0$ in $\Omega$, from which we see that $u > 0$ in $\Omega$ by strong maximum principle in~\cite{BD}. Then, by definition of $\lambda(\bar F)$, we get
$$
\lambda^* \leq \lambda(\bar F).
$$
%Let $x_0 \in \Omega$ and $\varphi \in C^2$ such that $\bar \phi - \varphi$ has a strict minimum at $x_0$. Then, consider $r = \bar \phi(x_0)$, $p = D\varphi(x_0)$ and $X = D^2 \varphi(x_0)$ and let $v$ be the solution to the cell problem~\eqref{cell} associated to these parameters. Then, for each $\epsilon > 0$, the function 
%$$
%x \mapsto \bar \phi_\epsilon(x) - \varphi(x) - \epsilon^2 v(x/\epsilon),
%$$
%attains its minimum at a point $x_\epsilon \to x_0$ as $\epsilon \to 0$. Moreover, $v$ is a smooth function {\br in view of the Evans-Krylov $C^{2,\alpha}$ result for convex equations}. Thus, using the equation solved by $\phi_\epsilon$, we have that
%\begin{equation*}
%H(x_\epsilon, x_\epsilon/\epsilon, \phi_\epsilon(x_\epsilon), D\varphi(x_\epsilon) + \epsilon Dv(x_\epsilon/\epsilon), D^2 \varphi(x_\epsilon) + D^2 v(x_\epsilon / \epsilon)) \leq -\lambda_\epsilon \phi_\epsilon(x_\epsilon).
%\end{equation*}
%
%Now, using that $\bar \phi, \varphi$ and $H$ are continuous, and that $v$ solves the cell problem associated to $r, p, X$, we can let $\epsilon \to 0$ to conclude that
%\begin{equation*}
%\bar H(x_0, \bar \phi(x_0), D\varphi(x_0), D^2 \varphi(x_0)) \leq \bar \lambda \bar \phi(x_0).
%\end{equation*}
%
%Then, we get that $\bar \phi$ satisfies
%\begin{equation*}
%\bar H(x, \bar \phi) \leq \bar \lambda \bar \phi \quad \mbox{in} \ \Omega.
%\end{equation*}
%
%Recalling that $\bar \phi \geq 0$ in $\Omega$, by the strong maximum principle 
% we get $\bar \phi > 0$ in $\Omega$, and by definition of the principal eigenvalues we conclude that $\bar \lambda \leq \lambda_1^+(\bar H)$.

The same procedure leads to the existence of a function $v \geq 0$ in $\Omega$, not identically zero, viscosity solution to
\begin{equation*}
\bar F(x, v, Dv, D^2v) \geq -\lambda_* v \quad \mbox{in} \ \Omega, \quad v = 0 \quad \mbox{on} \ \partial \Omega.
\end{equation*}

Hence, if $\lambda_* < \lambda(\bar F)$, the operator $\bar F(u) + \lambda_* u$ satisfies the maximum principle. Thus,  $v \leq 0$ in $\Omega$, which is a contradiction. Then, 
$$
\lambda(\bar F) \leq \lambda_* \leq \lambda^* \leq \lambda(\bar F),
$$
and the convergence of the eigenvalues follows.
%The proof for $\lambda_1^-(H_\epsilon)$ follows the same arguments.

\medskip

Now we tackle the convergence of the eigenfunctions. By the previous result, we have every converging subsequence of $\{ u^\epsilon \}$ converges to a positive solution of the effective eigenvalue problem. By simplicity of the first eigenvalue, we conclude the result.
\end{proof}

\begin{remark}
It is possible to prove the convergence of the principal eigenfunctions with different normalizations. For instance, let $x_0 \in \Omega$ and consider the normalization $u^\epsilon(x_0) = 1$. Since $u^\epsilon$ solves
\begin{align*}
& M^-_{\lambda, \Lambda}(D^2 u) - C_1 |Du| - \tilde C_2 |u| \leq 0 \quad \mbox{in} \ \Omega \\
& M^+_{\lambda, \Lambda}(D^2 u) + C_1 |Du| + \tilde C_2 |u| \leq 0 \quad \mbox{in} \ \Omega,
\end{align*}
for some $\tilde C_2 > 0$, by Harnack inequality (c.f. Theorem 3.6 in~\cite{QS}), for each $K \subset \subset \Omega$ containing $x_0$, there exists a constant $C_K > 0$ depending on $K$ and the data, but not on $\epsilon$ such that $\sup_K u^\epsilon \leq C_K \inf_{K} u^\epsilon \leq C_K$. Then, denoting
$F^\epsilon(x,r,p,X) = F(x, \frac{x}{\epsilon}, r,p,X)$, we consider $K$ such that the operator $F^\epsilon + \lambda^\epsilon$ satisfies the maximum principle in $\Omega \setminus K$, see Theorem 3.5 in~\cite{QS}. This compact set $K$ does not depend on $\epsilon$. Then, using a suitable power of the distance to the boundary, we can construct a barrier function to conclude uniform bounds for the family $\{ u^\epsilon \}_\epsilon$ in $\Omega \setminus K$. Hence, the family is uniformly bounded in $\bar \Omega$ and we can perform the same proof above to conclude the convergence of $u^\epsilon$ to the principal eigenfunction $u$ with normalization $u(x_0) = 1$.
\end{remark}

The following corollary can be obtained from Theorem~\ref{teo1}, resembling a multiscale results in the spirit of Corollary 1 in~\cite{ABM}. It will be useful when we look for the rate of convergence for the first eigenfunction in Section \ref{rate_function}.
\begin{cor}\label{cor_resonant}
Assume $F$ satisfies the assumptions of Theorem~\ref{teo1}, and let $\{ f^\epsilon \}_\epsilon \subset C(\Omega)$ such that $f^\epsilon \to 0$ locally uniformly in $\Omega$ as $\epsilon \to 0$. Assume $u^\epsilon$ is a viscosity solution to the problem 
        \begin{equation*}\label{resonant}
         \left \{ \begin{array}{rll} F(x, \frac{x}{\epsilon}, u, Du, D^2 u) & = - \lambda^\epsilon u + f^\epsilon, \quad & \mbox{in} \ \Omega, \\
         u & = 0 \quad & \mbox{on} \ \partial \Omega. \end{array} \right .
        \end{equation*}
       with $\{ u^\epsilon\}_\epsilon$ uniformly bounded in $\Omega$. Then, up to subsequences, $u^\epsilon \to u$ uniformly in $\Omega$, with $u$ solving~\eqref{eqeff}.
%    and let $u$ be a solution to
%        \begin{equation*}\label{eff_resonant}
%         \left \{ \begin{array}{rll} \bar F(x, u, Du, D^2 u) & = -\bar \lambda u  \ & \mbox{in} \ \Omega \\  u & = 0 \ & \mbox{on} \ \partial \Omega,
%         \end{array} \right .
%        \end{equation*}
%        where $\bar F$ is the effective Hamiltonian associated to $F$, and $\bar \la,bda = \lambda_1^+(\bar F, \Omega)$. 
%    respectively, where $\bar{H}$ is defined as in Proposition \ref{propbarH} and $f^\epsilon\to0$ uniformly over $\bar \Omega$. Assume additionally that $H$ is proper, i.e., nonincreasing with respect to $u$. Then $u^\epsilon \to u$ as $\epsilon \to 0$.
\end{cor}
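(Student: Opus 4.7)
The plan is to mirror the proof of Theorem~\ref{teo1}: extract a uniform limit by compactness, use Theorem~\ref{teo1} itself to pass the eigenvalues to the limit, and identify the equation satisfied by the limit via the perturbed test function method. The only new ingredient is the presence of the forcing $f^\epsilon$, and since it vanishes locally uniformly, it can be absorbed into the error margin of the argument without altering the structure of the proof.

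First I would extract a convergent subsequence. By the uniform bound on $\{u^\epsilon\}_\epsilon$, the equiboundedness of $\{\lambda^\epsilon\}_\epsilon$ (recalled in the discussion preceding the proof of Theorem~\ref{teo1}), and the local uniform bound on $\{f^\epsilon\}_\epsilon$, the right-hand sides $-\lambda^\epsilon u^\epsilon + f^\epsilon$ are uniformly bounded in $L^\infty_{\mathrm{loc}}(\Omega)$. Interior $C^\alpha$ estimates for equations pinched between the Pucci extremal operators via~\eqref{elliptic}, together with the uniform boundary Lipschitz estimate of~\cite{SS}, give uniform $C^{\alpha'}(\bar\Omega)$ bounds on $\{u^\epsilon\}_\epsilon$. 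Up to a subsequence, $u^{\epsilon_k} \to u$ uniformly in $\bar\Omega$ with $u = 0$ on $\partial\Omega$, and $\lambda^{\epsilon_k} \to \bar\lambda$ by Theorem~\ref{teo1}.

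Next, I would verify that $u$ is a viscosity solution of $\bar F(x, u, Du, D^2 u) = -\bar\lambda u$ in $\Omega$. After the reduction $F \mapsto F - (C_1+1) r$ making $F$ proper (which only shifts the eigenvalues by $C_1+1$), fix $x_0 \in \Omega$ and a smooth $\phi$ touching $u$ strictly from below at $x_0$, and assume for contradiction that
\[
\bar F(x_0, \phi(x_0), D\phi(x_0), D^2\phi(x_0)) \geq -\bar\lambda\, \phi(x_0) + \theta
\]
for some $\theta > 0$. Let $v$ solve the cell problem~\eqref{cell} at this jet and set $\phi_k(x) = \phi(x) + \epsilon_k v(x/\epsilon_k)$. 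The perturbed test function computation from the proof of Theorem~\ref{teo1} yields, for $k$ large and $r > 0$ small,
\[
F(x, x/\epsilon_k, \phi_k, D\phi_k, D^2\phi_k) \geq -\lambda^k \phi_k + \theta/2 \quad \text{in } B_r(x_0).
\]
Combined with the equation $F(\cdot) = -\lambda^k u^{\epsilon_k} + f^{\epsilon_k}$ satisfied by $u^{\epsilon_k}$ and~\eqref{elliptic}, the difference $\phi_k - u^{\epsilon_k}$ is a strict viscosity subsolution (with margin $\theta/2 - \|f^{\epsilon_k}\|_{L^\infty(B_r(x_0))} \geq \theta/4$) of an operator to which the maximum principle in small domains of Theorem~3.5 in~\cite{QS} applies. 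Letting $k \to \infty$ in the resulting comparison contradicts the strictness of the test point, and the reverse inequality is analogous.

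The main obstacle — really a bookkeeping check — is confirming that $f^\epsilon$ does not consume the $\theta/2$ margin. This is immediate from the locally uniform convergence $f^\epsilon \to 0$: fixing any ball $B_r(x_0) \Subset \Omega$, one has $\|f^{\epsilon_k}\|_{L^\infty(B_r(x_0))} < \theta/4$ for all $k$ large, so the forcing enters as a strictly dominated error. Everything else is verbatim the proof of Theorem~\ref{teo1}.
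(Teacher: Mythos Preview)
Your proposal is correct and matches the paper's intended approach: the paper does not give a separate proof of this corollary, stating only that it ``can be obtained from Theorem~\ref{teo1}'', and your adaptation of the perturbed test function argument---with $f^\epsilon$ absorbed into the $\theta$-margin thanks to its locally uniform vanishing---is precisely the expected verification.
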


%%%%%%%%%%%%%%%%%%%%%%%%%%%%%%%%%%%%%%%%%%%%%

\section{Rate of convergence for the principal eigenvalue}\label{rate_value}
In this section we provide the proof of Theorem \ref{teo_rate_vp}. The proof involves the use of higher order correctors found in \cite{KL}, whose construction is highly nontrivial. In order to make the exposition clearer we present the proof first in the linear case. %for which the construction of the higher order correctors is more natural. We point out that the techniques used in both case are similar and therefore we skip some details for the general nonlinear case.

\subsection{Linear operator}
We concentrate on linear operators with the form \eqref{Leps}. %By the general result Theorem \ref{teo1}, we already know that $u^\epsilon$ converges uniformly on $\bar \Omega$ to the unique solution $u$ of the effective eigenvalue problem~\eqref{eqefflin}.
For simplicity, given $a, M \in \R^{N \times N}$ we adopt the notation
    $$
    a M = \Tr \Big{(} a M \Big{)},
    $$
    and therefore, given a smooth function $\varphi: \R^N \to \R$, we have
    $$
    a D^2_{zz} \varphi(z) = \Tr(A D^2_{zz} \varphi(z)) = a^{ij} \partial^2_{z_i z_j} \varphi(z),
    $$
    where we have adopted the standard notation of sum over repeated indices. Whenever there is no room for confusion, we can write $\partial^2_{z_i z_j}$ as $\partial^2_{i j}$.

\medskip

Now we present the convergence result for the principal eigenvalues for linear operators.
%%%%%%%%%%%%%%%%%%%%%%%%%%%%%%%%%%%%%%%%%%%%%%%%%%%%%%%%%%%%%%%%%%%%%%%%%%%
\begin{prop}\label{lemalambdas}
 Assume the hypotheses of Thereom~\ref{teo_rate_function}. Let $\lambda^\epsilon$ be the principal eigenvalue for~\eqref{eqlin}, and $\bar \lambda$ the principal eigenvalue for~\eqref{eqefflin}. Then, there exists $C > 0$ just depending on $a,b,c$ and $\Omega$ such that $|\lambda^\epsilon - \bar \lambda| \leq C \epsilon$.
\end{prop}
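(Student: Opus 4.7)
The plan is to establish matching one-sided bounds $|\lambda^\epsilon - \bar\lambda| \leq C\epsilon$ via the BNV-type characterization~\eqref{caract} and its dual (both valid for $L^\epsilon$ by the theory of~\cite{BNV, QS}): any $\phi > 0$ in $\Omega$ satisfying $L^\epsilon \phi + \mu\phi \leq 0$ forces $\mu \leq \lambda^\epsilon$, while any $\phi > 0$ in $\Omega$ with $\phi = 0$ on $\partial \Omega$ and $L^\epsilon \phi + \mu\phi \geq 0$ forces $\lambda^\epsilon \leq \mu$ (otherwise the maximum principle for $L^\epsilon + \mu$ would be contradicted). I will exhibit one corrected function that serves as admissible test function for both characterizations with $\mu = \bar \lambda \pm C\epsilon$.

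Let $\bar u$ be the effective principal eigenfunction; since $\bar L$ has constant coefficients and $\partial\Omega$ is smooth, Schauder theory yields $\bar u \in C^\infty(\bar\Omega)$. Adapting the higher-order two-scale expansion of Kim-Lee~\cite{KL} from the pure second-order setting to our linear operator with lower-order terms, I would set
\[
w^\epsilon(x) = \bar u(x) + \epsilon v_1(x, x/\epsilon) + \epsilon^2 v_2(x, x/\epsilon) + \epsilon \zeta^\epsilon(x),
\]
with $v_1, v_2$ periodic in $y = x/\epsilon$ and chosen so that, after plugging into $L^\epsilon$ and collecting powers of $\epsilon$, the $\epsilon^{-1}$ contribution vanishes and the $\epsilon^0$ contribution reduces to $\bar L \bar u = -\bar \lambda \bar u$. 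Concretely, at each frozen $x$ the corrector $v_1(x, \cdot)$ solves an $a(y)$-Laplace-type cell problem on $\T^N$ with source determined by $D^2 \bar u(x)$ and $D\bar u(x)$ (the latter absorbing the chain-rule term coming from $b(y)$), while $v_2(x, \cdot)$ solves a second cell problem whose compatibility condition at each $x$ is exactly the definition of $\bar L$. The $C^6$ hypothesis on $a, b, c$ together with $\bar u \in C^\infty$ give, by Schauder theory on the torus, $v_1, v_2 \in C^2(\bar\Omega \times \T^N)$ with uniform bounds. Finally, $\zeta^\epsilon$ is a boundary-layer correction that cancels the $O(\epsilon)$ trace of $\epsilon v_1(\cdot, \cdot/\epsilon)$ on $\partial\Omega$, so that $w^\epsilon = 0$ on $\partial\Omega$; the Hopf bound $\bar u \geq c_0\, \mathrm{dist}(\cdot, \partial \Omega)$ then ensures $w^\epsilon > 0$ in $\Omega$ for $\epsilon$ small.

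By construction the residual has the form $L^\epsilon w^\epsilon + \bar\lambda w^\epsilon = \epsilon R^\epsilon$, and the hard part is to upgrade the naive $\|R^\epsilon\|_\infty \leq C$ to the \emph{multiplicative} estimate $|R^\epsilon(x)| \leq C\, w^\epsilon(x)$, which is what both characterizations actually require — a mere $L^\infty$ bound on $R^\epsilon$ would yield only $\lambda^\epsilon = \bar\lambda + O(1)$ after dividing by $w^\epsilon$ near $\partial\Omega$. Obtaining this sharper bound is the main obstacle and the main place where the boundary-layer term $\zeta^\epsilon$ earns its keep; it is also where the adaptation of the pure second-order construction in~\cite{KL} to the lower-order coefficients $b, c$ requires care, since each new order of the expansion produces additional terms of the form $\epsilon^{-1} b(y) \partial_y v_k$ that must be absorbed into the cell problem. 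With the multiplicative bound in hand, applying~\eqref{caract} and its dual to $w^\epsilon$ with $\mu = \bar\lambda \pm C\epsilon$ closes the estimate.
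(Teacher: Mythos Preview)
Your overall strategy is sound, but the proposal contains a genuine gap at exactly the point you flag as ``the hard part'': the multiplicative bound $|R^\epsilon(x)| \leq C\,w^\epsilon(x)$ simply cannot hold for a test function with $w^\epsilon = 0$ on $\partial\Omega$. The interior residual $R^\epsilon$ arising from the two--scale expansion does not vanish at the boundary (it involves derivatives of the correctors and of $\bar u$, evaluated at boundary points), while $w^\epsilon(x)$ behaves like $\mathrm{dist}(x,\partial\Omega)$; hence $R^\epsilon/w^\epsilon$ blows up no matter how many correctors you append, and no boundary layer $\zeta^\epsilon$ solving $L^\epsilon\zeta^\epsilon = 0$ can repair this, since such a term contributes nothing to $R^\epsilon$. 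A secondary issue is that in non--divergence form the first \emph{oscillatory} corrector must enter at order $\epsilon^2$, not $\epsilon$: if $v_1(x,\cdot)$ genuinely depends on $y$ then $L^\epsilon(\epsilon v_1(x,x/\epsilon))$ produces a term $\epsilon^{-1}a(y)D^2_{yy}v_1$, so the $O(\epsilon)$ slot is occupied by a function $\psi_1(x)$ with no fast dependence, while the cell problems live at orders $\epsilon^2$ and $\epsilon^3$.

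The paper circumvents the boundary obstruction not by forcing $w^\epsilon = 0$ on $\partial\Omega$ but by adding a barrier $\pm C\epsilon\, d(x)^\gamma$ with $\gamma\in(0,1)$, whose second derivatives produce a singular contribution of order $\epsilon\, d^{\gamma-2}$ that dominates the bounded residual in a fixed boundary strip. With this device the two inequalities are obtained by two different mechanisms: for $\lambda^\epsilon \geq \bar\lambda - C\epsilon$ the paper uses the Donsker--Varadhan min--max representation and a strictly positive test function $u + v^\epsilon + C_1\epsilon + C_2\epsilon d^\gamma$, splitting the integral into a near--boundary region (where the singular term makes the integrand nonpositive) and an interior region (where the denominator is uniformly bounded below); for $\lambda^\epsilon \leq \bar\lambda + C\epsilon$ it takes $u + v^\epsilon - C_1\epsilon - \epsilon d^\gamma$, which is \emph{negative} near $\partial\Omega$, shows it is a subsolution of $L^\epsilon\phi + (\bar\lambda + C\epsilon)\phi \geq 0$, and concludes by touching the eigenfunction $u^\epsilon$ from below at an interior point. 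Your BNV route can be made to work, but only once you insert this $d^\gamma$ barrier and abandon the requirement that the test function vanish on $\partial\Omega$.
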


\medskip

We prepare the proof of this lemma by introducing the problems involved in the construction of the correctors. Given $k,l=1,\ldots, N$, we consider the following cell problems
    \begin{equation}\label{defbara}
        a(y) D^2_{yy} \chi(y) + a_{kl}(y) = \gamma, \quad y \in \T^N,
    \end{equation}
where the pair $(\chi, \gamma)$ is the solution of the additive eigenvalue problem. As in~\cite{E}, the constant $\gamma$ is the unique constant for which this problem admits a solution, which we will denote by $\bar{a}_{kl}$. It is easy to see that these are the entries of the effective diffusion matrix in~\eqref{eqefflin}. The solution $\chi$, which is smooth and unique up to an additive constant, will be denoted by $\chi^{kl}$.

\medskip

Similarly, given $k=1, \ldots N$, $\bar b_k$ and $\bar c$ in~\eqref{eqefflin} are the unique ergodic constants associated to the problem
    \begin{equation}\label{defbarb}
        a(y) D^2_{yy} \eta(y) + b_{k}(y) = \bar b_{k}, \quad y \in \T^N,
    \end{equation}
whose solution is denoted by $\eta^k$, and
    \begin{equation}\label{defbarc}
        a(y) D^2_{yy} \nu(y) + c(y) = \bar c, \quad y \in \T^N,
    \end{equation}
whose solution is denoted simply by $\nu$. We adopt the normalization $\eta^k(y_0) = \nu(y_0)=0$ for some fixed $y_0 \in \T^N$, which implies that $\eta^k \equiv 0$ if $b\equiv0$ and $\nu\equiv 0$ if $c\equiv0$.

Now, for each $x \in \Omega, y \in \T^N$, we define our second-degree corrector as
    \begin{equation}\label{defw2}
        w_2(x,y) = \chi^{kl}(y) \partial^2_{kl}u(x) + \eta^k(y) \partial_{k}u(x) + \nu(y)u(x),
    \end{equation}
and notice that this function satisfies the equation
    \begin{align*}
         a(y) D_{yy}^2 w_2(x,y) + a(y) D_{xx}^2 u(x) + b(y) \cdot D_x u(x) + c(y) u(x) 
 %       &\quad = \bar a D_{xx}u(x) + \bar b \cdot D_x u(x) +  \bar{c} u(x) \\
         = -\bar \lambda u(x).
    \end{align*}

With this in mind, we define additional correctors recursively as follows: given $k,l,m=1, \ldots, N$, we denote by $\bar a_{klm}$ the unique constant making the problem
    \begin{equation}\label{defaklm}
        a(y) D_{yy}^2 \chi(y) + 2 a_{*m}(y) \cdot D_y \chi^{kl}(y) = \bar a_{klm} \quad \mbox{in} \ \T^N,
    \end{equation}
solvable by a (unique up to an additive constant) function $\chi=\chi^{klm}$. Here we have denoted by $a_{*m}$ the $m$-th column of the matrix $a$.

To obtain the remaining correctors, we use the following equations in the torus
\begin{equation}\label{defbkl}
a(y) D_{yy}^2 \eta^{kl}(y) + 2 a_{*k}(y) \cdot D_y \eta^{l}(y) + b \cdot D_y \chi^{kl}(y) = \bar b_{kl} \quad \mbox{in} \ \T^N \quad (k,l=1,\ldots, N),
\end{equation}

\begin{equation}\label{defck}
    a(y) D_{yy}^2 \nu^k(y) + 2a_{*k}D_y \nu(y) + b \cdot D_y \eta^{k}(y) = \bar c_{k} \quad \mbox{in} \ \T^N \quad (k=1,\ldots, N),
\end{equation}

\begin{equation}\label{defd}
    a(y)D^2_{yy}\xi + b(y)\cdot D_y\nu(y) = \bar d \quad \mbox{in} \ \T^N.
\end{equation}

We adopt the normalization $\eta^{kl}(y_0)=\nu^k(y_0)=\xi(y_0)=0$ for some fixed $y_0\in \T^N$. Note that the ingredients in \eqref{defaklm}-\eqref{defd} are defined using only the original coefficients and the ``first round'' of correctors (i.e. those used to define $w_2$).

With the parameters defined in \eqref{defaklm}-\eqref{defd}, we define $\psi_1$ as the unique solution to
    \begin{equation}\label{defpsi1}
        \left \{ \begin{array}{l} \bar L \psi_1 = - \bar a_{klm} \partial^3_{k l m} u(x) - \bar b_{k l} \partial^2_{k l} u(x) - \bar c_k \partial_{k} u(x) - \bar d u(x) \quad \mbox{in}\ \Omega, \\ 
        \psi_1 = 0 \quad \mbox{on} \ \partial \Omega, \end{array} \right .
    \end{equation}
where we have assumed the standard index summation over repeated indices. 

In turn, we define the third-order corrector as
    \begin{align}
        w_3(x,y) =& \chi^{klm}(y) \partial^3_{klm} u(x) + \eta^{kl}(y)\partial^2_{kl} u(x) + \nu^k(y)\partial_{k} u (x) +\xi(y)u(x) + \nonumber\\
        & \quad +\chi^{kl}(y)\partial^2_{kl} \psi_1(x) + \eta^k(y)\partial_{k} \psi_1 (x) +\nu(y)\psi_1(x).\label{defw3}
     \end{align}
 
We claim that $w_3$ satisfies the equation
    \begin{align*}
        &a(y) D^2_{xx}\psi_1(x) + b(y)\cdot D_x\psi_1(x) + c(y)\psi_1(x) + 2a(y)D^2_{yx}w_2 + \\
        &\quad +b(y)\cdot D_y w_2 + a(y)D^2_{yy} w_3 = 0, \quad y \in \T^N.
    \end{align*}
Indeed, using \eqref{defw3} and \eqref{defw2}, together with \eqref{defaklm}-\eqref{defd}, we have
    \begin{align*}
        & a(y)D^2_{yy} w_3 \\
        ={}&  \left(\bar a_{klm} - 2 a_{*m}(y) \cdot D_y \chi^{kl}(y)\right) \partial^3_{klm} u  \\
        &\quad + \left(\bar b_{kl} - 2 a_{*k}(y) \cdot D_y \eta^{l}(y) + b \cdot D_y \chi^{kl}\bar b_{kl}\right)\partial^2_{kl}u\\
        &\quad + \left(\bar c_{k} - 2a_{*k}D_y \nu + b \cdot D_y \eta^{k}\right)\partial_k u +  \left(\bar d - b(y)\cdot D_y\nu(y)\right)u \\
        &\quad   + \left(\bar a_{kl} - a_{kl}(y)\right)\partial^2_{kl}\psi_1 + \left(\bar b_k - b_k(y)\right)\partial_k \psi_1 + \left(\bar c - c(y)\right)\psi_1\\
        ={}&  \bar a_{klm} \partial^3_{k l m} u + \bar b_{k l} \partial^2_{k l} u + \bar c_k \partial_{k} u + \bar d u\\
        &\quad - 2 a_{*m}(y) \cdot D_y \chi^{kl}(y) \partial^3_{klm} u - \left(2 a_{*k}(y)\cdot D_y \eta^{l}(y) + b \cdot D_y \chi^{kl}\bar b_{kl}\right)\partial^2_{kl}u\\
        &\quad - \left(2a_{*k}D_y \nu + b \cdot D_y \eta^{k}\right)\partial_k u - b(y)\cdot D_y\nu(y) u + \bar L\psi_1\\
        &\quad - a(y) D^2_{xx}\psi_1(x) - b(y)\cdot D_x\psi_1(x) - c(y)\psi_1(x),
    \end{align*}
while the definition \eqref{defw2} gives
    \begin{align*}
        2a(y)D^2_{yx}w_2 ={}& 2a_{ij}\left( \partial_{i}\chi^{kl}(y)\partial_j(\partial^2_{kl}u) + \partial_{i}\eta^{k}(y)\partial_j(\partial_{k}u) + \partial_{i}\nu(y)\partial_j u\right)\\
        ={}& 2 a_{*m}(y) \cdot D_y \chi^{kl}(y)\partial^3_{klm} u + 2 a_{*k}(y) \cdot D_y \eta^{l}(y)\partial^2_{kl} u\\
        &\quad + 2a_{*k}\cdot D_y \nu\partial^3_{k} u,
    \end{align*}
and
    \begin{align*}
        b(y) \cdot D_y w_2 = b(y)\cdot D_y \chi^{kl}(y) \partial^2_{kl} u + b(y)\cdot D_y \eta^{k}(y) \partial_{k} u + b(y)\cdot D_y \nu(y) u.
    \end{align*}
    
The claim follows by summing-up the previous equalities and use~\eqref{defpsi1}.

For $i = 2,3$, we consider the boundary correctors $z_k^\epsilon$ as the unique solution to the Dirichlet problem
    \begin{equation}
        L^\epsilon z_k^\epsilon = 0 \quad \mbox{in} \ \Omega; \qquad z_k^\epsilon(x) = - w_k(x, x/\epsilon) \quad \mbox{on} \ \Omega.
    \end{equation}
  
Finally, we define the full corrector
    \begin{align}\label{corrector}
        v^\epsilon(x) = \epsilon \psi_1(x) + \epsilon^2(w_2(x, x/\epsilon) + z_2^\epsilon(x)) + \epsilon^3(w_3(x, x/\epsilon) + z_3^\epsilon(x))
    \end{align}
    
For $k=1,2,3$, by the results of~\cite{KL} we have $w_k, z_k^\epsilon$ are uniformly bounded in terms of $\epsilon$, and that $w_k$ has $C^{2,\alpha}$ estimates in $y$, and $C^{5-k, \alpha}$ estimates in $x$, uniform with respect to $\epsilon$. In particular, we get $\|v^\epsilon\|_\infty \leq C \epsilon$ for some $C > 0$.

Now we are in position to provide the

\begin{proof}[Proof of Proposition~\ref{lemalambdas}] %For simplicity, we denote $\bar \lambda = \lambda(\bar L)$. We divide the proof in two steps. 

Using the classical minmax formula for the principal eigenvalue (see~\cite{DV}), there exists a probability measure on $\Omega$, denoted by $\mu^\epsilon$, such that
\begin{equation*}
-\lambda^\epsilon = \inf_{\phi > 0} \int_{\Omega} \frac{L_\epsilon \phi}{\phi}(x) d\mu^\epsilon(x),
\end{equation*} 
where the infimum is taken among all smooth functions $\phi$ which are positive in $\Omega$.
Now, recalling $v^\epsilon$ in~\eqref{corrector}, consider the function
$$
\bar v^\epsilon = u + v^\epsilon + C_1 \epsilon + C_2 \epsilon d^{\gamma}, 
$$
where $\gamma \in (0,1)$ and $C_1, C_2 > 0$ can be taken in such a way $\bar v^\epsilon > 0$ in $\Omega$. Then, by the estimates of Theorem 1.1 in~\cite{KL}, we have
\begin{align*}
-\lambda^\epsilon \leq & \int_{\Omega} \frac{L_\epsilon \phi}{\phi}(x) d\mu^\epsilon(x) \\
\leq & \int_{\Omega} \frac{-\bar \lambda u(x) - c C_2 \epsilon d^{\gamma - 2}(x) + C\epsilon^3 }{\bar v^\epsilon(x)} d\mu^\epsilon(x) \\
= & -\bar \lambda + \int_{\Omega} \frac{-\bar \lambda (u(x) - \tilde v^\epsilon(x)) - c C_2 \epsilon d^{\gamma - 2}(x) + C\epsilon^3 }{\bar v^\epsilon(x)} d\mu^\epsilon(x).
%= & \int_{\Omega} \frac{-\lambda (u(x) - \tilde v^\epsilon(x)) - c C_2 \epsilon d^{\gamma - 2}(x) + C\epsilon^3 }{\tilde v^\epsilon(x)} d\mu^\epsilon(x)
\end{align*}

At this point, since $\gamma < 1$, we can find $\delta > 0$ independent of $\epsilon$ such that the numerator in the integral term above is nonpositive for $x \in \Omega$ such that $d(x) < \delta$. On the other hand we see that $\bar v^\epsilon(x) \geq c_\delta > 0$  if $d(x) \geq \delta$. Hence, we get that
\begin{equation*}
-\lambda^\epsilon \leq -\bar \lambda + C \epsilon c_\delta \int_{\{ x : d(x) \geq \delta\}} d\mu^\epsilon(x),
\end{equation*}
 and from this we conclude that $\lambda^\epsilon \geq \bar \lambda - C\epsilon$.

 \medskip
 
 Now we prove an upper bound for $\lambda^\epsilon$. For this, we consider the function
 $$
 \underline v^\epsilon = u + v^\epsilon - C_1 \epsilon - \epsilon d^{\gamma},
 $$
 for some $C_1 > 0$ such that $\underline v^\epsilon < 0$ on $\partial \Omega$ and $\underline v^\epsilon(x_0) > 1/2$ if $\epsilon$ is small enough. Then, we see that for each $x \in \Omega$ we have
 $$
L^\epsilon \underline v^\epsilon = - \bar \lambda u(x) + \epsilon a D^2_{xx} d^{\gamma}(x) - O(\epsilon^3).
 $$
 
 We notice that for some $\delta_0$ just depending on the smoothness of the domain, there exists some $c > 0$ such that $a D_{xx}^2 d^\gamma(x) \geq c d^{\gamma - 2}(x)$ for $d(x) \leq \delta_0$. Moreover, for each $\delta > 0$, we have $a D_{xx}^2 d\gamma(x) \geq -C_\delta$ if $d(x) \geq \delta$. 
 
 It is easy to see that
 \begin{align*}
L^\epsilon \underline v^\epsilon \geq & -\bar \lambda \underline v^\epsilon + \epsilon a D^2_{xx} d^{\gamma}(x) - \bar C\epsilon \\
\geq &  -(\bar \lambda + C_3 \epsilon) \underline v^\epsilon + \epsilon(C_3 \underline v^\epsilon + a D^2_{xx} d^{\gamma}(x) - \bar C)
 \end{align*}
in $\Omega$, for some universal constant $\bar C$.

Write $h = C_3 \underline v^\epsilon + a D^2_{xx} d^{\gamma}(x) - \bar C$ and let $0 < \delta \leq \delta_0$. We divide the analysis in different regions on $\Omega$. Since $u > 0$ in $\Omega$, for each $\delta > 0$ and all $\epsilon$ small enough in terms of $\delta$, we have
\begin{align*}
h \geq C_3 c_\delta - C_\delta - \bar C
\end{align*}
in the region $d(x) \geq \delta$. Thus, we fix $\delta$ and enlarge $C_3$ in order to get $h \geq 0$ in this region. 

Now, if $R \epsilon \leq d(x) \leq \delta$, by Hopf's Lemma over $u$, we can take $R$ large enough to get $\tilde v^\epsilon \geq 0$ in this region, and therefore
$$
h \geq c \delta^{\gamma - 2} - \bar C,
$$
and dimishing $\delta$ just in terms of the universal constants $c, \bar C$ we get $h \geq 0$ in this region too. 

Fix $\delta$ and $C_3$ as above, if $d(x) \leq R \epsilon$, we have $\tilde v^\epsilon \leq C_R \epsilon$ for some $C_R > 0$. Then for $x$ such that $d(x) \leq \epsilon$, we have $\underline v^\epsilon \leq C\epsilon$ and
\begin{equation*}
h \geq - C_3 C_R \epsilon + c \epsilon^{\gamma - 2} - \bar C,
\end{equation*}
and then, just taking $\epsilon$ small enough we conclude that $h \geq 0$ here. 

Therefore, we have 
$$
L_\epsilon \underline v^\epsilon \geq -(\bar \lambda + C_3 \epsilon) \underline v^\epsilon \quad \mbox{in} \ \Omega.
$$

Multiplying $\underline v^\epsilon$ by a positive constant if necessary, we have that $\underline v^\epsilon$ touches $u^\epsilon$ from below at some interior point $x \in \Omega$, and therefore
$$
-(\bar \lambda + C_3 \epsilon) \underline v^\epsilon(x) \leq L^\epsilon \underline v^\epsilon(x) \leq \lambda^\epsilon \underline v^\epsilon(x),
$$
from which $\lambda^\epsilon \leq \bar \lambda + C_3 \epsilon$, which concludes the proof.
\end{proof}

\subsection{Nonlinear operator}
We are going to consider here the fully nonlinear homogenization problem~\eqref{fully},
%    \begin{equation}\label
%        F \Big{(} \frac{x}{\epsilon}, D^2 u^\epsilon \Big{)} = -\lambda^\epsilon u^\epsilon \quad \mbox{in} \ \Omega; \qquad u^\epsilon = 0 \quad \mbox{on} \ \partial \Omega,
%    \end{equation}
where $F$ satisfies the assumptions of Theorem~\ref{teo_rate_vp}. %: \T^N \times \mathbb S^{N} \to \R$ is---as before---uniformly elliptic and positively homogeneous in its second variable. %{\blue Furthermore, as mentioned in the introduction, we will assume $F$ is \textit{smooth }is both variables and \textit{convex }in the second variable.}
For $M \in S^{N}$, $\bar F(M)$ is defined as the unique constant $c$ such that the problem
    $$
        F(y, M + D_{yy}^2 w(y)) = c, \quad y \in \T^N,
    $$
admits a viscosity solution $w = w(y; M)$. By classical Evans-Krylov estimates for convex, uniformly elliptic operators (see \cite{CC, E2, Kr}) we have $y \mapsto w(y;M)$ is $C^{2, \alpha}(\T^N).$% and {\color{red} by a bootstrap argument we have the function is at least $C^4$}.

By standard results in periodic homogenization, we have $\bar F$ is convex, positively $1$-homogeneous, uniformly elliptic (with the same ellipticity constants of $F$), see~\cite{E}. In addition, if $F \in C^{4,1}$, then $\bar F$ too, see Proposition 3.2.1 in~\cite{KL}. By Theorem~\ref{teo1}, we have the principal eigenvalue $\lambda^\epsilon$ in~\eqref{fully} converges to the principal eigenvalue $\bar \lambda$ in~\eqref{efffully}. 
%Thus, by Theorem~\ref{teo1}, $u^\epsilon \to u$ uniformly in $\bar \Omega$, where $u$ solves the eigenvalue problem
%    \begin{equation}\label
%        \bar F (D^2 u) = -\bar \lambda u \quad \mbox{in} \ \Omega; \quad \bar u = 0 \quad \mbox{on} \ \partial \Omega.
%    \end{equation}
Again by Evans-Krylov estimates, we have the principal eigenfunction $u$ of this problem is in $C^{6}$ since $F \in C^{4,1}$, see Lemma 3.3.1 in~\cite{KL}.

\medskip    

In order to prove Theorem \ref{teo_rate_vp}, we introduce the appropriate corrector terms. As usual, our second-order term in the expansion is given by
$$
w_2(x,y) := w(y; D^2_{xx} u(x)), \quad x \in \Omega, \ y \in \T^N,
$$
and therefore, for each $x \in \Omega$ and by the definition of $w(y;M)$, we have
\begin{equation}\label{w2F}
F(y, D_{xx}^2u(x) + D^2_{yy}w_2(x,y)) = -\bar \lambda u(x), \quad y \in \T^N.
\end{equation}

For $x \in \Omega$, $y \in \T^N$ denote
$$
X^0 = D_{xx}^2 u(x) + D_{yy}^2 w(y, D^2_{xx} u(x)), \quad a_{ij}(x,y) = D_{p_{ij}} F(y, X^0).
$$

The entries $a_{ij}$ are the coefficients of the linearization of the equation around the leading profile $(x,y) \mapsto u(x) + \epsilon^2 w_2(x, x/\epsilon)$. In fact, we have the existence of functions $\{ w_k \}_{k=1}^4$ with $w_k$ sufficiently smooth, and such that the function
\begin{equation}\label{weps}
w^\epsilon(x) = u(x) + \epsilon w_1(x) + \epsilon^2 w_2(x, x/\epsilon) + \epsilon^3 w_3(x, x/\epsilon) + \epsilon^4 w_4(x, x/\epsilon),  
\end{equation}
satisfies
\begin{equation}\label{18}
F(\frac{x}{\epsilon}, D^2_{xx} w^\epsilon) = -\bar \lambda u(x) + O(\epsilon) \quad \mbox{in} \ \Omega.
\end{equation}

We refer to Theorem 1.2.1 in \cite{KL} for the proof of the expression~\eqref{18}. The construction of the correctors follows an inductive argument that we describe next.

We start noticing that first term in the expansion does not depend on $y$. It is constructed in the following way: for each $x$, we have the existence of a unique constant $\Psi = \Psi_1(x)$ for which the ergodic problem
\begin{equation*}
a_{ij}(x,y) D^2_{y_i y_j} v(x,y) + 2 a_{ij}(x,y) D^2_{x_i y_j} w_2(x,y) = \Psi
\end{equation*}
has a solution. 

The entries of the linearized effective problem correspond to 
$$
\bar a_{ij} = D_{p_{ij}} \bar F(D^2 u(x)),
$$
and are such that $\bar a$ is uniformly elliptic. Then, we can solve the Dirichlet problem
\begin{equation*}
\bar a_{ij} D_{x_i x_j}^2 \psi = - \Psi(x), \ \mbox{in} \ \Omega; \qquad \psi = 0 \ \mbox{on} \ \partial \Omega,
\end{equation*}
and from here we find $w_1 = \psi_1$ in the expansion of $w^\epsilon$. 

In fact, with $w_1$ and $w_2$ at hand, we can run the inductive argument in Lemma 3.3.2 in~\cite{KL} to obtain $w_3$ and $w_4$ in the expansion of $w^\epsilon$ in~\eqref{weps} (with $\psi_1 = \psi$ and $\Psi_1 = \Psi$ here).

%******
%Now we show how to construct the approximations, following~\cite{KL}. 
%
%We consider a family of functions $\psi_i: \Omega \to \R$ given by
%$$
%\psi_{i} \equiv 0 \quad \mbox{for} \ i = -1, 0, 2, 3,
%$$
%and $\psi_1$ to be precised later. We also consider functions $\phi_i: \Omega \times \T^N \to \R$ with $\phi_1 \equiv 0$ and $\phi_2(x,y) = w_2(x,y)$. 
%
%Now, we define $\psi_1$ as the unique solution to the Dirichlet problem
%$$
%\bar a D^2_{xx} \psi_1 = -\Psi_1 \quad \mbox{in} \ \Omega; \qquad \psi_1 = 0 \quad \mbox{on} \ \partial \Omega,
%$$
%where $\bar a_{ij}(x) = \bar F_{p_{ij}}(D^2_{xx}u(x))$, and $\Psi_1$ is defined as follows: consider the function $(x,y) \mapsto \tilde \Phi_3(x,y)$ given by
%$$
%\tilde \Phi_3 = {\color{red} a_{ij} D_{x_i x_j} \phi_1 + \chi^{ab} D_{x_a x_b} \psi_{-1}} + 2a_{ij} D_{x_i y_j} (w_2 + {\color{red} \chi^{ab} D_{x_a x_b} \psi_0) + 0},
%$$
%where $a_{ij}(x,y) = F_{p_{ij}}(y, D_{xx}^2 u(x) + D^2_{yy}w_2(x, y))$. Then, we see that there exists a unique constant $c$ such that the problem
%$$
%a_{ij} D^2_{y_i y_j} v + \tilde \Phi_3(x,y) = c \quad y \in \T^N,
%$$
%has a viscosity solution $v = v(x,y)$. We denote $c = \Psi_1(x)$, and from here we have $\psi_1$, and as in the linear case, we set $w_1 = \psi_1$, and we denote $\phi_3(x,y) = v(x,y)$.
%
%Then, we define
%$$
%w_3(x,y) = \phi_3(x,y) + \chi^{ij}(x,y) D_{x_i x_j} \psi_1(x) + 0,
%$$
%where $\chi^{ij}(x,y) = D_{p_{ij}} w(y; D^2_{xx}u(x))$.
%
%\medskip
%******

With the above elements, we are able to provide the
%\begin{prop}\label{teofully}
%    Assume $F: \T^N \times \mathbb S^{N} \to \R$ be an uniformly elliptic operator (with ellipticity constants $0 < \lambda_F \leq \Lambda_F < +\infty$), $F \in C^{4,1}(\T^N \times \mathbb S^{N \times N})$ and convex in its second variable. Let $\lambda^\epsilon, \bar \lambda$ the associated principal eigenvalues in the homogenization problems~\eqref{fully}-\eqref{efffully}. Then, there exists a constant $C$ just depending on $\lambda_F, \Lambda_F$ and the regularity estimates of $F$, such that  $|\lambda^\epsilon - \bar \lambda| \leq C \epsilon$.
%\end{prop}

\begin{proof}[Proof of Theorem~\ref{teo_rate_vp}:]
Using the Donsker-Varadhan characterization of the principal eigenvalue, we have
\begin{equation*}
-\lambda^\epsilon = \inf_{\phi > 0} \int_{\Omega} \frac{F(x/\epsilon, D^2 \phi(x))}{\phi(x)} d\mu^\epsilon(x),
\end{equation*} 
for some probability measure $\mu^\epsilon \in \mathcal P(\bar \Omega)$.

Now, since $w_i$ is uniformly bounded for $i=1,...,4$, we have $w^\epsilon(x) > 0$ for each $x \in \Omega$, $d(x) \leq c \epsilon$ for some $c > 0$ small enough, but independent of $\epsilon$. Thus, the function
$$
\phi(x) = w^\epsilon(x) + \epsilon + C\epsilon d^{\gamma}(x), \quad x \in \Omega,
$$
is strictly positive for some $C > 0$ large enough, and $\phi \in C^2(\Omega)$. Then, we conclude that for each $x \in \Omega$ and $\epsilon > 0$ we have
$$
F(\frac{x}{\epsilon}, D^2\phi(x)) = F(\frac{x}{\epsilon}, D^2 w^\epsilon(x)) + C \mathcal M^+ (D^2 d^\gamma(x)),   
$$
where $\mathcal M^+$ is the extremal Pucci operator with ellipticity constants $0 < \lambda_F, \Lambda_F < +\infty$. Using this function $\phi$ in the formula of the eigenvalue, we have
\begin{align*}
-\lambda^\epsilon \leq & \int_{\Omega} \frac{-\bar \lambda u(x) + O(\epsilon) - C c \epsilon \lambda_F d^{\gamma - 2}(x)}{u(x) + \epsilon + C \epsilon d^\gamma(x)} d\mu^\epsilon(x) \\
\leq & -\bar \lambda + \int_{\Omega} \frac{O(\epsilon) - C c \epsilon \lambda_F d^{\gamma - 2}(x)}{u(x) + \epsilon + C \epsilon d^\gamma(x)} d\mu^\epsilon(x),
\end{align*}
and from here, if we consider $\delta_0 > 0$ small but independent of $\epsilon$, we have the numerator inside the last integral is nonpositive for $d(x) \leq \delta_0$. Hence we get
$$
-\lambda_\epsilon \leq -\bar \lambda + \int_{d(x) \geq \delta_0} \frac{C\epsilon}{u(x) + \epsilon + C \epsilon d^\gamma(x)} d\mu^\epsilon(x) \leq -\bar \lambda + C\epsilon,
$$
from which, we get the lower bound 
$$
\lambda^\epsilon \geq \bar \lambda - C \epsilon,
$$
for some $C > 0$.

\medskip

For the upper bound, we consider the function
\begin{equation}
\phi = w^\epsilon - \epsilon - C \epsilon d^{\gamma}(x), \quad x \in \Omega.
\end{equation}

We can take $C > 0$ such that $\phi > 0$ for each $d(x) > \delta_0$ for some $\delta_0$ small, and  $\phi(x) < 0$ for all $d(x) \leq \delta_1$ with $\delta_1 < \delta_0$.

Then, for each $x \in \Omega$, we have
\begin{align*}
F(\frac{x}{\epsilon}, D^2 \phi(x)) \geq  & -\bar \lambda u(x) - \tilde C\epsilon + C \epsilon \mathcal M^- (-D^2 d^\gamma(x))  \\
\geq & -\bar \lambda \phi(x) - \tilde C\epsilon + C \lambda_F \epsilon d^{\gamma - 2}(x) \\
= & -(\bar \lambda + \ell \epsilon) \phi(x) + \ell\epsilon \phi(x) - \tilde C\epsilon + C \lambda_F \epsilon d^{\gamma - 2}(x),
\end{align*}
for some $\ell > 0$. Notice that for $d(x) \leq \delta$ with $\delta$ small, but independent of $\epsilon$, we have $\tilde C + \lambda_F d^{\gamma - 2}(x) \geq 0$. For $d(x) \geq \delta$ we have $d^{\gamma - 2(x)}$ is bounded and therefore we conclude that
\begin{equation*}
F(\frac{x}{\epsilon}, D^2 \phi(x)) \geq -(\bar \lambda + \ell \epsilon )\phi \quad \mbox{in} \ \Omega,
\end{equation*}
by taking $\ell$ suitably large.

Since $\phi(x) < 0$ for each $x$ a neighborhood of the boundary, up to a multiplicative constant, we have $\phi$ touches from below $u^\epsilon$ at a point $x_0 \in \Omega$, with $\phi(x_0) = u^{\epsilon}(x_0) > 0$, and  we have
$$
-(\bar \lambda +\ell \epsilon) \phi(x_0) \leq F(x_0/\epsilon, D^2 \phi(x_0)) \leq -\lambda^\epsilon u^\epsilon(x_0),
$$
and from here we get 
$
\lambda^\epsilon \leq \bar \lambda + \ell \epsilon.
$ This concludes the proof.
\end{proof}

We believe that the discussion presented here allows to get Proposition~\ref{teo_rate_vp} for more general smooth, convex, $1$-homogeneous nonlinearity $F=F(x,y,r,p,X)$. More specifically, the method used to find the correctors associated to the lower order terms in the first part of this section, can be combined with the intricate linearization procedure in~\cite{KL}, but we do not pursue in this direction. 

\section{Proof of Theorem~\ref{teo_rate_function}}\label{rate_function}

%\subsection{An auxiliary problem}
This section is entirely devoted to the 
\begin{proof}[Proof of Theorem~\ref{teo_rate_function}:] We have proved the reate of convergence for the eigenvalues in Proposition~\ref{lemalambdas}.

%In order to get a rate of convergence for the eigenfunction we solve an auxiliary problem inspired by the work of Kesavan see \cite{Kesavan}. 
By replacing $\bar \lambda$ and $\lambda^\epsilon$ with $\bar \lambda + C_1 + 1$ and $ \lambda^\epsilon + C_1 + 1$, respectively, we can assume that $L^\epsilon, \bar \lambda$ are proper operators.
Let $u$ be the solution to~\eqref{eqefflin} with $\| u \|_\infty = 1$. Given $\epsilon\in (0,1)$ consider the following homogenization problem
\begin{align}\label{w_e}
\left \{ \begin{array}{rll}
L^\epsilon w^\epsilon & = -\bar\lambda u \quad & \mbox{in} \ \Omega \\
w^\epsilon& = 0 \quad & \mbox{in} \ \partial\Omega,
\end{array} \right .
\end{align}

By the results in~\cite{KL}, we have the existence of $C > 0$ just depending on the coefficients of $L$ and $\Omega$ (but not on $\epsilon$), such that 
\begin{equation}\label{weu}
\| w^\epsilon - u\|_\infty \leq C \epsilon.
\end{equation}

Assume $u^\epsilon$ is a normalized eigenfunction for~\eqref{eqlin} (say $\| u^\epsilon \|_\infty = 1$), and consider $z^\epsilon = u^\epsilon - w^\epsilon + t_\epsilon u^\epsilon$ for some $t_\epsilon \in \R$ to be fixed. It is easy to see that $z^\epsilon$ solves
the problem
\begin{align}\label{z_e}
\left \{ \begin{array}{rll} 
L^\epsilon z^\epsilon + \lambda^\epsilon z^\epsilon & = -\lambda^\epsilon w^\epsilon+\bar\lambda u \quad  & \mbox{in} \ \Omega, \\
z^\epsilon & = 0 \quad & \mbox{on} \ \partial \Omega. \end{array} \right .
\end{align}

We consider $t_\epsilon$ such that 
\begin{equation}\label{z_ort_u}
    (z^\epsilon, u^\epsilon)  = 0,
\end{equation}
where $(\cdot,\cdot)$ denotes the inner product in $L^2(\Omega)$. This is possible by taking 
$$
t_\epsilon = \frac{(w^\epsilon, u^\epsilon)}{\| u^\epsilon \|_{L^2}} -1.
$$

Notice that the family $\{ z^\epsilon \}_\epsilon$ is uniformly bounded, and by standard elliptic estimates we have it is equicontinuous in $\bar \Omega$.

%Notice that $\zeta^\epsilon$ solves the same equation than $z^\epsilon$.% {\blue For definiteness, we may assume $u$ is normalized, but the following argument requires only that $u$ is not identically $0$.}

%In order to guarantee the existence and uniqueness of $z^\epsilon$ we need to verify \eqref{orto}. Indeed we have
%\begin{align*}
%\int_{\Omega}(\lambda^\epsilon w^\epsilon-\bar\lambda u)\psi_1^{\epsilon,*}dx&=\int_{\Omega}w^\epsilon \lambda^\epsilon \psi_1^{\epsilon,*} dx-\int_{\Omega}\bar\lambda u\psi_1^{\epsilon,*}dx\\
%&=\int_{\Omega}\psi_1^{\epsilon,*}(-L^\epsilon w^\epsilon) dx-\int_{\Omega}\bar\lambda u\psi_1^{\epsilon,*}dx\\
%&=0,
%\end{align*}
%where we used the definition of solution for an adjoint equation (see \eqref{defadj}) and the fact that $w^\epsilon$ is the solution of \eqref{w_e}. Hence the existence and uniqueness of $z^\epsilon$ is guaranteed. Furthermore the solution $z^\epsilon$ is uniformly H\"older continuous with exponent $\alpha$ independent of $\epsilon$. 

With this choice, we claim the existence of $C_0 > 0$ such that
\begin{align}\label{claim}
\|z^\epsilon\|_\infty \leq C_0 \epsilon,
\end{align}
for all $\epsilon \in (0,1)$. Let us argue by contradiction, assuming the existence of a sequence $\epsilon_n \in (0,1)$ such that $z^n=z^{\epsilon_n}$ satisfy
\[
\|z^n\|_\infty \geq n \epsilon_n,
\]
as $n \to \infty$. Notice that, in particular, we have $z^n$ is not identically zero in $\Omega$ for all $n$, and that $\epsilon_n \to 0$ as $n \to \infty$. Since $\|\lambda^{\epsilon_n} w^{\epsilon_n} - \bar \lambda u\|_\infty \leq C \epsilon_n$ for some $C > 0$, we have $\hat z^{n} = z^{n}/\| z^n \|_\infty$ solves
\begin{align*}
\left \{ \begin{array}{rll} 
L^{\epsilon_n} \hat z^n + \lambda^{\epsilon_n} \hat z^n & = o_n(1) \quad  & \mbox{in} \ \Omega, \\
\hat z^n & = 0 \quad & \mbox{on} \ \partial \Omega, \end{array} \right .
\end{align*}
where $o_n(1) \to 0$ as $n \to \infty$. Then, substracting a subsequence (that we still denote by $\hat z^n$), by Corollary \ref{cor_resonant} we have that $(\hat z^n)$ converges to a nontrivial solution $z$ to the eigenvalue problem
    \begin{align*}
        \left \{ \begin{array}{rll}
            \bar L z   & = -\bar\lambda z \quad & \mbox{in} \ \Omega \\
            z& = 0 \quad & \mbox{in} \ \partial\Omega,
        \end{array} \right .
    \end{align*}
hence $z$ is an eigenfunction of $L$ with $\|z\|_\infty=1$. On the other hand, using~\eqref{z_ort_u} we have
\[
(z,u)=0,
\]
but this implies that $z=0$, a contradiction, and the claim is proven.

From~\eqref{claim}, we get that
$$
\| w^\epsilon - (1 + t_\epsilon) u^\epsilon \|_\infty \leq C_0 \epsilon,
$$
and from here we get the result, by triangle inequality,~\eqref{weu} and replacing $u^\epsilon$ by $(1 + t_\epsilon) u^\epsilon$ in the statement of the theorem.
%
%From \eqref{claim} we also deduce that
%\begin{align}\label{casi}
%\|w^\epsilon+z^\epsilon-u\|_{\infty}\leq C\|w^\epsilon-u\|_{\infty}.
%\end{align}
%Finally, observe that using \eqref{w_e}, \eqref{z_e} can be rewritten as 
%\[
%L^\epsilon(z^\epsilon+w^\epsilon)=-\lambda^\epsilon(z^\epsilon+w^\epsilon),
%\]
%{\blue i.e., $z^\epsilon+w^\epsilon$ is an eigenfunction of $L^\epsilon$. Thus, by simplicity, we may renormalize so that $u^\epsilon = z^\epsilon+w^\epsilon$, and the desired rate is given by \eqref{casi}.}
\end{proof} 

%\begin{proof}[Proof of Theorem \ref{teo_rate_function}]
%	
%{\blue From the conclusion of Lemma \ref{lema_aux}, it suffices to show that there exists $C>0$ such that 
%	\begin{equation}\label{w_e_u}
%		\| w^\epsilon - u\|_\infty \leq C\epsilon.
%	\end{equation}
%However, from assumption \ref{proper} we have that \eqref{w_e} lies within the \emph{proper} regime. With the construction of the correctors given in \eqref{defbara}-\eqref{defpsi1} (adapted to the presence of lower order terms) we may obtain the linear rate of convergence proceeding as in \cite{KL}. [SerÃ¡? --A.]}
%
%%
%%as treated in \cite{CM}; therefore, \eqref{w_e_u} follows from Theorem 2.1 therein. A minor difference between \eqref{eqlin} and the equation treated in \cite{CM} is the presence of the coefficient $c=c(y)$ in \eqref{eqlin}. However this bears no influence on the result since assumption \eqref{proper} translates to having $-c(y)\geq \mu > 0$ for all $y\in \T^N$.
%\end{proof}

%%%%%

\bigskip

\noindent {\bf Acknowledgements.} 
 A.~R.-P. was partially supported by Fondecyt Grant Postdoctorado Nacional No.~ 3190858. G. D. was partially supported by Fondecyt Grant 1190209. E.~T. was partially supported by Fondecyt Grant No.~1201897.

\end{document}